\begin{document}

\newtheorem{definition}{Definition}[section]
\newtheorem{proposition}{Proposition}[section]
\newtheorem{theorem}{Theorem}[section]
\newtheorem{observation}{Observation}[section]
\newtheorem{exercise}{Exercise}
\newtheorem{lemma}{Lemma}[section]
\newtheorem{remark}{Remark}[section]
\newtheorem{corollary}{Corollary}[section]

\newtheorem{result}{Result}
\newtheorem{conjecture}{Conjecture}
\newtheorem{claim}{Claim}
\newtheorem{note}{Note}
\newtheorem{question}{Question}
\newtheorem{fact}{Fact}
\newtheorem{assumption}{Assumption}
\newtheorem{principle}{Principle}
\numberwithin{equation}{section}


\newcommand\norma[1]{\left\lVert#1\right\rVert}

\newcounter{alphatheo}
\newenvironment{alphatheo}{\refstepcounter{alphatheo}\medskip\noindent{\bf Theorem\ \Alph{alphatheo}.} \it }{\medskip}

\newcounter{example}
\newenvironment{example}{\refstepcounter{example}\medskip\noindent{\sc Example\ \thesection.\theexample:}}{\medskip}

\renewenvironment{proof}{\medskip\noindent{\sc Proof:}}{\medskip}

\def\qed{\ifhmode\unskip\nobreak\fi\quad 
  \ifmmode\square\else$\square$\fi}

\newcommand{\R}{\mathbb R}
\newcommand{\Q}{\mathbb Q}
\newcommand{\N}{\mathbb N}
\newcommand{\Z}{\mathbb Z}
\newcommand{\C}{\mathbb C}
\newcommand{\Ss}{\mathbb S}

\def\B{{\mathcal B}}
\def\bsL{{\mathcal L}}
\def\calD{{\mathcal L}}
\def\Cl{{\mathcal C}}
\def\D{{\mathcal D}}
\def\E{{\mathcal E}}
\def\F{{\mathcal F}}
\def\H{{\mathcal H}}
\def\I{{\mathcal I}}
\def\L{{\mathcal L}}
\def\M{{\mathcal M}}
\def\O{{\mathcal O}}
\def\P{{\mathcal P}}
\def\S{{\mathcal S}}
\def\U{{\mathcal U}}
\def\V{{\mathcal V}}
\def\W{{\mathcal W}}
\def\X{{\mathcal X}}
\def\Y{{\mathcal Y}}
\def\ccinf{C^\infty_{c}}
\def\cinf{C^\infty}
\def\bmo{\hbox{\rm bmo\,}}
\def\vmo{\hbox{\rm vmo\,}}
\def\VMO{\hbox{\rm VMO\,}}
\def\<{\langle}
\def\>{\rangle}
\def\eps{\varepsilon}
\def\setadelta{\stackrel{\delta_p}{\longrightarrow}}

\def\ele{\mathcal L}
\def\eleort{\mathcal L^\bot}

\newcommand{\supp}{\text{supp}\,}
\newcommand{\dist}{{\rm dist }}
\newcommand{\diam}{\rm {diam}\,}
\newcommand{\diver}{{\rm div\,}}
\newcommand{\Op}{{\rm Op\,}}
\newcommand\inner[2]{\langle #1, #2 \rangle} 


\title[Nonhom. div-curl type estimates for system of complex vector fields]{Nonhomogeneous div-curl type estimates for system of complex vector fields on local Hardy space}

\author {C. Machado}
\address{Departamento de Matem\'atica, Universidade Federal de S\~ao Carlos, S\~ao Carlos, SP,
13565-905, Brasil}
\email{catarina.machado@estudante.ufscar.br}

\author {T. Picon}
\address{Departamento de Computa\c{c}\~ao e Matem\'atica, Universidade de S\~ao Paulo, Ribeir\~ao Preto, SP,  14040-901 , Brasil}
\email{picon@ffclrp.usp.br}

\thanks{The authors were supported by Funda\c{c}\~ao de Amparo \`a  Pesquisa do Estado de S\~ao Paulo (FAPESP - grant 18/15484-7 and grant 21/12655-8)  
and the second by Conselho Nacional de Desenvolvimento Cient\'ifico e Tecnol\'ogico (CNPq - grant 315478/2021-7)}
\subjclass[2020]{Primary 35J46 35B45 42B30; Secondary 30H10 35A23 46F10}

\keywords{elliptic systems, div-curl estimates; Hardy spaces, bmo decomposition}

\begin{abstract}

In this work, we present a nonhomogeneous version of the classical div-curl type estimates in the setup of elliptic system of complex vector fields with constant coefficients on local Hardy space $h^1$. 
As an application, we obtain a decomposition of the local $bmo$ space via a family of vector fields depending on div-curl terms.

\end{abstract}

\maketitle

\section{Introduction}

The div-curl inequality due to Coifman, Lions, Meyer \& Semmes in \cite{CLMS} asserts that if $V \in L^{p}(\R^{N},\R^{N})$ and $W \in L^{p'}(\R^{N},\R^{N})$ are vector fields satisfying ${\rm div}\;V=0$ and ${\rm curl}\;W=0$, in the sense of distributions, for some $1<p<\infty$ with $\frac{1}{p} + \frac{1}{p'} = 1$, then $V \cdot W$ belongs to the Hardy space $H^1(\R^N)$ and moreover there exists a constant $C>0$ such that
\begin{equation}\label{eq01}
\|V \cdot W\|_{H^{1}}\leq C \|V\|_{L^{p}}\|W\|_{L^{p'}}.
\end{equation}
The previous inequality improves the control obtained by the H\"older inequality, since the Hardy space $H^1(\R^{N})$ is continuously and strictly embedded in $L^1(\R^{N})$. The assumption ${\rm curl}\;W=0$ implies that $W=\nabla \phi$ and the estimate \eqref{eq01} can be written equivalently as 
\begin{equation}\label{eq02}
\| V \cdot \nabla\phi \|_{H^{1}}\leq C \|V \|_{L^{p}}\|\nabla\phi\|_{L^{p'}},
\end{equation} 
where the required condition ${\rm div}\;V=0$ is understood as $V$ belonging to the kernel of the formal adjoint of gradient operator $\nabla$. An extension of this inequality, in the local setting of higher order elliptic linear differential operators with complex variable coefficients, was recently present by the authors in \cite{MP}.

A natural questions arises on a nonhomogeneous version of the inequality \eqref{eq01} when the assumptions on divergence and curl are not free. The answer was presented by Dafni in \cite[Theorem 5]{D5} and we state as following: 

\begin{theorem}\label{prop_1}
	Suppose $V$ and $W$ are vector fields on $\R^N$ satisfying
	$$V \in L^p(\R^{N},\R^{N}), \; W \in L^{p'}(\R^{N},\R^{N}), \; 1 < p < \infty \,\,\, \text{and} \,\,\, \dfrac{1}{p}+\dfrac{1}{p'} =1.$$
	If there exists a function $ f \in L^p(\R^N)$ and a matrix-valued function $A$ with components in $L^{p'}(\R^N)$ such that, in the sense of distributions,
	$${\rm div}\; V = f, \;\;\;\; {\rm curl}\; W = A,$$
	then $V \cdot W$ belongs to the local Hardy space $h^1(\R^N)$, with
\begin{equation}\label{galia}
\norma{V \cdot W}_{h^1} \leq C \left( \norma{V}_{L^p}\norma{W}_{L^{p'}} + \norma{f}_{L^p}\norma{W}_{L^{p'}} + \norma{V}_{L^p} \norma{A}_{L^{p'}} \right)
\end{equation}
\end{theorem}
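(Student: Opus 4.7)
The plan is to use the local grand maximal function characterization $\|u\|_{h^1}\sim \|m^+_\varphi u\|_{L^1}$, where $m^+_\varphi u(x)=\sup_{0<t<1}|\varphi_t*u(x)|$ for a fixed $\varphi\in C_c^\infty(\R^N)$ with $\int\varphi=1$ and $\varphi_t(y)=t^{-N}\varphi(y/t)$. The task then reduces to producing a pointwise bound on $m^+_\varphi(V\cdot W)(x)$ by a function whose $L^1$ norm is controlled by the right-hand side of \eqref{galia}.

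Fix $x$ and $0<t<1$, and choose a cutoff $\eta$ with $\eta\equiv 1$ on $B(x,2t)$, $\mathrm{supp}\,\eta\subset B(x,4t)$, and $|\nabla^k\eta|\lesssim t^{-k}$. Let $\Gamma$ be the Newtonian kernel; set $u:=\Gamma*(\eta f)$ and construct a matrix/vector potential $U$ with $\mathrm{curl}\,U=\eta A$ via the standard Riesz-transform inversion of the curl on the Helmholtz piece of $\eta A$. Define
\[\tilde V:=V-\nabla u,\qquad \tilde W:=W-U,\]
so that $\mathrm{div}\,\tilde V=0$ and $\mathrm{curl}\,\tilde W=0$ on $B(x,2t)$. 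Calder\'on--Zygmund theory gives $\|\nabla u\|_{L^p}\lesssim \|f\|_{L^p}$ and $\|U\|_{L^{p'}}\lesssim \|A\|_{L^{p'}}$. Decomposing
\[V\cdot W=\tilde V\cdot\tilde W+\nabla u\cdot W+\tilde V\cdot U,\]
I would (i) apply the classical homogeneous inequality \eqref{eq01} to $\tilde V\cdot\tilde W$, implemented pointwise as $|\varphi_t*(\tilde V\cdot\tilde W)(x)|\lesssim M_p\tilde V(x)\,M_{p'}\tilde W(x)$ by the standard integration-by-parts proof of the div-curl lemma (which only uses the divergence/curl conditions on the support $B(x,t)\subset B(x,2t)$ of $\varphi_t(x-\cdot)$), and (ii) control the two cross terms $\nabla u\cdot W$ and $\tilde V\cdot U$ directly by H\"older and the Hardy--Littlewood maximal operator. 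Integration in $x$ together with the $L^p$-boundedness of $M_p$ then assembles the three products $\|V\|_{L^p}\|W\|_{L^{p'}}$, $\|f\|_{L^p}\|W\|_{L^{p'}}$, and $\|V\|_{L^p}\|A\|_{L^{p'}}$ appearing in \eqref{galia}.

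The main obstacle is the localized div-curl step in (i): \eqref{eq01} is stated for globally divergence-free and curl-free fields, whereas here $\tilde V$ and $\tilde W$ satisfy these conditions only on $B(x,2t)$. One must therefore extract from the proof of \eqref{eq01} a pointwise bound on $\varphi_t*(\tilde V\cdot\tilde W)(x)$ that uses nothing beyond the local gauge---typically by writing $\tilde W=\nabla\psi$ locally and transferring derivatives onto $\varphi_t$ via integration by parts---while ensuring the resulting constant is uniform in $(x,t)$ so that the maximal-function bound survives integration in $x$. Everything else---the Calder\'on--Zygmund $L^p$ bounds on $\nabla u$ and $U$, the H\"older estimates on the cross terms, and the maximal-function machinery---is by now standard and assembles routinely.
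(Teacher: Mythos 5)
Your approach is a genuinely different route from the one the paper (following Dafni) uses, and it has a real gap. The paper proves Theorem~A by first performing a \emph{global} Hodge decomposition (Lemma~\ref{hodge}): it writes $V=V_1+V_2$ and $W=W_1+W_2$ with $V_1,W_2$ divergence-free and $V_2,W_1$ gradients, then applies two separate lemmas (Theorems~\ref{prop_curl=0const} and \ref{prop_div=0const}), each of which corrects only \emph{one} of the two fields. In each case the pointwise estimate on $\varphi_t*(V\cdot W)(x)$ is obtained not by invoking the homogeneous estimate \eqref{eq01} as a black box, but by a direct integration by parts together with the local maximal operator $M^{loc}_{W^{-1,s}}$ (Lemma~\ref{lemma_bound_M}), which is precisely the device that turns the nonhomogeneous term $\langle \mathrm{div}\,V,\,\varphi^x_t(\phi-\phi_{B^x_t})\rangle$ into something integrable in $x$. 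You do not use that operator at all.

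The crucial defect in your scheme is that you localize \emph{both} $V$ and $W$ simultaneously around each $(x,t)$. Even granting the obstacles you flag, once you expand $\tilde V\cdot\tilde W$ (and also $\tilde V\cdot U = V\cdot U - \nabla u\cdot U$), you pick up a term with $|\nabla u|$ multiplied against $|U|$. Since $|\nabla u|\lesssim t\,Mf$ and $|U|\lesssim t\,M A$ on $B(x,t)$, integrating in $x$ produces a contribution of order $\norma{f}_{L^p}\norma{A}_{L^{p'}}$. This term is \emph{not} on the right-hand side of \eqref{galia} and cannot be absorbed by the other three (take $\norma{V}_{L^p}=\norma{W}_{L^{p'}}=1$, $\norma{f}_{L^p}=\norma{A}_{L^{p'}}=\epsilon^{-1}$). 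So your argument would at best prove a weaker inequality than stated. The paper's global Hodge splitting is precisely how this cross term is avoided: in Theorem~\ref{prop_curl=0const} one only has $\mathrm{div}_{\L^*}V$ non-trivial, in Theorem~\ref{prop_div=0const} only $\mathrm{curl}_{\L}W$ non-trivial, and neither lemma ever multiplies these two quantities together.

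Two further technical points. First, the step ``construct $U$ with $\mathrm{curl}\,U=\eta A$'' is problematic as stated: $A=\mathrm{curl}\,W$ is closed, but $\eta A$ is not, so $\mathrm{curl}\,U=\eta A$ has no global solution; you would instead need a Poincar\'e-lemma potential of $A$ on $B(x,2t)$, with a scale-invariant $L^{p'}$ bound, and the construction must be done so that the resulting maximal-function estimate is uniform in $(x,t)$. Second, the ``main obstacle'' you identify---obtaining the pointwise estimate for the locally divergence/curl-free pair---is real, but the correct way around it is exactly what the paper already does in its two lemmas: integrate by parts against $\varphi_t^x$ (subtracting the mean $\phi_{B^x_t}$) so that only $\nabla\varphi$, Sobolev--Poincar\'e, and Hardy--Littlewood maximal functions appear, rather than quoting \eqref{eq01}. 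If you insist on your local decomposition, you must carry out that computation with $x,t$-dependent potentials and still control the cross term, which you cannot.
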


Here $h^p(\R^{N})$ denotes the local Hardy spaces introduced by Goldberg in \cite{G}. For a given $\varphi \in \mathcal{S}(\R^N)$ such that $\int{\varphi(x)dx} \neq 0$ and for $t>0$, let $\varphi_t(x) := t^{-n}\varphi(t^{-1}x)$. We say that a tempered distribution  $f \in \mathcal{S}'(\R^N)$ belongs to  $h^p(\R^N)$ when
$$
\|f\|_{h^p} := \|m_{\varphi}\|_{L^p} < \infty, \;\; \text{where} \;\;	m_{\varphi}f(x) := \sup_{0<t<1} \left| \langle f,\, \varphi_t(x-\cdot)  \rangle \right|.
$$
The functional $\| \cdot \|_{h^p}$ defines a norm for $p\geq 1$ and a quasi-norm otherwise. We refer to it always as a norm for simplicity. Even though we start with a fixed $\varphi$, the local Hardy spaces remains the same no matter which $\varphi$ we choose. It is well known that $H^{p}(\R^{N})$ is continuously embedded in $h^{p}(\R^{N})$ for all $0<p<\infty$ and $H^{p}(\R^{N})=h^{p}(\R^{N})=L^{p}(\R^{N})$ with comparable norms for $1<p<\infty$. In particular, $H^{1}(\R^{N})\subset h^{1}(\R^{N}) \subset L^{1}(\R^{N})$ strictly. In contrast to Hardy space, the localizable version is closed by test functions, precisely: if $\varphi \in C_{c}^{\infty}(\R^{N})$ and  $f \in h^{p}(\R^{N})$ then $\varphi f \in h^{p}(\R^{N})$.

Estimates of the type \eqref{galia} were extended in several settings, see for instance \cite{BFG, CDY, CGS,HHP, MP, YYZ}. Suppose now $\L := \left\{L_{1},\dots,L_{n}\right\}$ be a system of linearly independent vector fields with complex coefficients defined on $\R^N$ 
and consider the gradient operator associated with $\L$ given by 
$$\nabla_{\L}\,u := (L_{1}u,\dots,L_{n}u), \quad \text{ for } u\in C^{\infty}(\R^N)$$ 
and its adjoint operator 
$${\rm div}_{\L^{*}}\,v := \sum_{j=1}^{n}L^{*}_{j}v_{j}, \quad \text{ for } v\in C^{\infty}(\R^N,\R^{n}),$$ 
for  $L_{j}^{*}:=\overline{ L_j^{t}}$, where $\overline{L_{j}}$ denotes the vector field obtained from $L_j$ by conjugating its coefficients and $L_j^t$ is the formal transpose of $L_j$. Naturally, we may define the curl operator associated with $\L$ given by matrix 
$${\rm curl}_{\L} v := \left( L_i v_j - L_j v_i \right)_{ij},\quad \text{ for } v\in C^{\infty}(\R^N, \C^n).$$ 
Note that when $n=N$ and $L_{j}=\partial_{x_{j}}$ for $j=1,\dots,n$, we get $\nabla_{\mathcal{L}}=\nabla$, ${\rm div}_{\mathcal{L^{\ast}}} = \rm{div}$, and ${ \rm curl_{\L}} = \rm{curl}$. In this paper, we address the following question: for which systems of vector fields $\L$ 
the global estimate 
\begin{equation}\label{eq_main1}
		\norma{V \cdot W}_{h^1} \leq C \left( \norma{V}_{L^p}\norma{W}_{L^{p'}} + \norma{{\rm div}_{\L^*} \; V}_{L^p}\norma{W}_{L^{p'}} + \norma{V}_{L^p} \norma{{\rm curl}_{\L} \; W}_{L^{p'}} \right)
	\end{equation}
holds? Our main result is the following:
 
\begin{alphatheo}\label{teo_principal1}
Let $\left\{L_{1},\dots,L_{n}\right\}$ be an elliptic system of complex vector fields on $\R^{N}$ with constant complex coefficients with $n \geq 2$. If $V \in L^p(\R^N,\C^n)$ and $W \in L^{p'}(\R^N, \C^n)$ with $1<p<\infty$ satisfy 
$$div_{\L^*}\; V \in L^p(\R^N) \text{ and } curl_{\L} \; W \in L^{p'}(\R^N,\C^{n \times n})$$
then $V \cdot W$ belongs to $h^1(\R^N)$. Moreover, there exists a constant $C>0$ such that $\eqref{eq_main1}$ holds.
\end{alphatheo}

The ellipticity of the system $\left\{L_{1},\dots,L_{n}\right\}$ means that, for any {\it real} 1-form $\omega$ satisfying  $\left\langle \omega, L_{j}\right\rangle=0$ for all $j=1,\dots,n$ implies $\omega=0$, that is equivalent to saying that the second order operator
\begin{equation*}
\Delta_{\L}\; :=\;L_1^* L_1+\cdots+L_n^* L_n
\end{equation*}
is elliptic in the classical sense. 

Local estimates of this type were previously studied in the case $W:=\nabla_{\L}\varphi$ and  ${\rm div}_{\mathcal{L^{\ast}}}\;v=0$ in \cite[Theorem A]{HHP}, where $\mathcal{L}$ is an elliptic system of complex vector fields with smooth variable coefficients, namely: for every point $x_{0} \in \Omega$ there exist an open neighborhood $x_0\in U \subset \Omega$ and a constant $C(U)>0$ such that 
\begin{equation}\label{estimateHHP}
	\|\nabla_{\mathcal{L}}\phi\cdot v\|_{h^1} \le C\|\nabla_{\mathcal{L}}\phi\|_{L^p}\|v\|_{L^{p'}}
\end{equation}
holds for any $\phi\in C_{c}^{\infty}(U, \C)$ and $v\in C^{\infty}_{c}(U,\C^{n})$ satisfying ${\rm div}_{\mathcal{L^{\ast}}}\;v=0$. We remark that $curl_{\L}\,W$ is not necessary null. In fact, 
$$curl_{\L} (\nabla_{\L}\phi)=\left([L_{i},L_{j}]\phi\right)_{i,j},$$  
where $[L_{i},L_{j}]:=L_{i}L_{j}-L_{j}L_{i}$ is the commutator of the vector fields. Clearly, if the vector fields $\left\{L_{1},\dots,L_{n}\right\}$ has constant coefficients then $curl_{\L} W=curl_{\L} (\nabla_{\L}\phi)=0$ and then \eqref{estimateHHP} recover \eqref{eq_main1} locally, assuming ${\rm div}_{\mathcal{L^{\ast}}}\;v=0$.

In the same spirit of \cite[Theorem 5]{D5}, the proof of Theorem A  is simplified by reducing it to two specific cases of the inequality \eqref{eq_main1}. 
The  first is a global nonhomogeneous version of the inequality \eqref{estimateHHP}, namely:
\begin{theorem}\label{prop_curl=0const}
	Let $\L = \{L_1,...,L_n\}$ be an elliptic system of complex vector fields on $\R^N$ with complex constant coefficients with $n \geq 2$. If $V \in L^p(\R^N, \C^n)$ and $div_{\L^*} \; V \in L^p(\R^N)$ with $1<p<\infty$, then the inequality 
	\begin{equation*}\label{eqpropA}
		\norma{V \cdot \nabla_{\L} \phi}_{h^1} \leq C \left( \norma{V}_{L^p} + \norma{div_{\L^*} \; V}_{L^p} \right)\norma{\nabla_{\L} \phi}_{L^{p'}}
	\end{equation*}
	holds for all function $\phi$ such that $\nabla_{\L} \phi \in L^{p'}(\R^N, \C^n)$.
\end{theorem}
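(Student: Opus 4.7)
My approach is to perform a Hodge-type decomposition of $V$ adapted to the constant-coefficient elliptic operator $\Delta_\L = \sum_j L_j^* L_j$, reducing the estimate to two pieces handled by different tools: a ``divergence-free'' part controlled by a global version of the homogeneous case \cite[Theorem~A]{HHP}, and a ``gradient-gradient'' part handled by the classical Dafni theorem (Theorem~\ref{prop_1}) after a linear change of variables. Using ellipticity of $\L$ (so that the symbol $\sum_l|\sigma_l(\xi)|^2$ of $\Delta_\L$, with $\sigma_j(\xi):=\sum_k a_{jk}\xi_k$, is positive for $\xi\neq 0$), define $u$ by the Fourier multiplier $\hat u(\xi) = -i\bar\sigma(\xi)\cdot\hat V(\xi)/\sum_l|\sigma_l(\xi)|^2$, so that $\Delta_\L u = \mathrm{div}_{\L^*} V$. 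Setting $V^\perp := V - \nabla_\L u$, one has $\mathrm{div}_{\L^*}V^\perp = 0$ and the decomposition $V \cdot \nabla_\L \phi = V^\perp \cdot \nabla_\L \phi + \nabla_\L u \cdot \nabla_\L \phi$. The matrix multiplier $(\sigma_j\bar\sigma_k/\sum_l|\sigma_l|^2)_{jk}$ corresponding to $V\mapsto\nabla_\L u$ is bounded, smooth on $\R^N\setminus\{0\}$, and homogeneous of degree zero, hence a Calder\'on-Zygmund operator on $L^p$; consequently $\|\nabla_\L u\|_{L^p}\leq C\|V\|_{L^p}$ and $\|V^\perp\|_{L^p}\leq C\|V\|_{L^p}$. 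An analogous argument with the multiplier $\xi_k\bar\sigma_j/\sum_l|\sigma_l|^2$ yields the elliptic regularity bound $\|\nabla\phi\|_{L^{p'}}\leq C\|\nabla_\L\phi\|_{L^{p'}}$.

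For the divergence-free piece, since $\mathrm{div}_{\L^*}V^\perp = 0$ and the coefficients of $\L$ are constant, the local argument of \cite[Theorem~A]{HHP} should go through globally without its partition-of-unity localization step, yielding
\[
\|V^\perp \cdot \nabla_\L \phi\|_{h^1} \leq C\|V^\perp\|_{L^p}\|\nabla_\L\phi\|_{L^{p'}} \leq C\|V\|_{L^p}\|\nabla_\L\phi\|_{L^{p'}}.
\]
For the gradient-gradient piece, expanding $L_j=\sum_k a_{jk}\partial_k$ rewrites
\[
\nabla_\L u \cdot \nabla_\L \phi = \sum_k \Bigl(\sum_j a_{jk}L_j u\Bigr)\partial_k\phi = (Bu)\cdot\nabla\phi,
\]
with $(Bu)_k := \sum_j a_{jk} L_j u$. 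Since $\mathrm{curl}(\nabla\phi)=0$, Theorem~\ref{prop_1} produces
\[
\|(Bu)\cdot\nabla\phi\|_{h^1} \leq C\bigl(\|Bu\|_{L^p} + \|\mathrm{div}(Bu)\|_{L^p}\bigr)\|\nabla\phi\|_{L^{p'}}.
\]
Here $\|Bu\|_{L^p}\leq C\|\nabla_\L u\|_{L^p}\leq C\|V\|_{L^p}$, $\|\nabla\phi\|_{L^{p'}}\leq C\|\nabla_\L\phi\|_{L^{p'}}$, and $\mathrm{div}(Bu) = \sum_j L_j^2 u$. The ratio of symbols $-\sum_j\sigma_j^2/\sum_l|\sigma_l|^2$ is bounded (since $|\sum_j\sigma_j^2|\leq\sum_l|\sigma_l|^2$) and defines a zero-order Calder\'on-Zygmund multiplier, so $\|\sum_j L_j^2 u\|_{L^p}\leq C\|\Delta_\L u\|_{L^p} = C\|\mathrm{div}_{\L^*}V\|_{L^p}$. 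Combining the two pieces yields the claimed estimate.

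The main obstacle is rigorously establishing the global version of \cite[Theorem~A]{HHP} used in the divergence-free step, since the reference treats only the local case for smooth variable coefficients. In the constant-coefficient setting this should reduce to unwinding the proof of \cite{HHP} while replacing its local constructions by global Fourier-analytic ones (the fundamental solution of $\Delta_\L$ is defined globally and the relevant Calder\'on-Zygmund multipliers are bounded on $L^p(\R^N)$), but the technical details require careful verification of decay at infinity. A secondary subtlety is that the Hodge potential $u$ is built from a degree-$(-1)$ multiplier and may fail to lie in $L^p$; however, only $\nabla_\L u$ and $\Delta_\L u$ appear in the argument, and both are well controlled by the hypotheses.
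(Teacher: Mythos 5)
Your proposal takes a genuinely different route from the paper. The paper proves the estimate directly, in the style of Dafni \cite{D5}: it writes $\varphi_t*(V\cdot\nabla_\L\phi)(x)$ as the pairing $\overline{\inner{div_{\L^*}V}{\Phi_t^x}}$ plus a commutator term, controls the pairing by the local maximal operator $M^{loc}_{W^{-1,s}}(div_{\L^*}V)$ and the commutator by Hardy--Littlewood maximal functions via the Sobolev--Poincar\'e inequality, and then integrates. You instead split $V=V^\perp+\nabla_\L u$ by a Hodge decomposition (which is the device the paper reserves for assembling Theorem A out of Theorems \ref{prop_curl=0const} and \ref{prop_div=0const}, via Lemma \ref{hodge}) and try to dispatch each piece by citing a prior theorem.

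Your treatment of the gradient--gradient piece is correct and is a nice observation: rewriting $\nabla_\L u\cdot\nabla_\L\phi=(Bu)\cdot\nabla\phi$ with $(Bu)_k=\sum_j a_{jk}L_j u$, applying Theorem \ref{prop_1} with $\mathrm{curl}(\nabla\phi)=0$, and bounding $\|\mathrm{div}(Bu)\|_{L^p}=\|\sum_j L_j^2 u\|_{L^p}\lesssim\|\Delta_\L u\|_{L^p}=\|div_{\L^*}V\|_{L^p}$ via the bounded zero-order multiplier $-\sum_j\sigma_j^2/\sum_l|\sigma_l|^2$, together with Lemma \ref{calderon} for $\|\nabla\phi\|_{L^{p'}}\lesssim\|\nabla_\L\phi\|_{L^{p'}}$, does give the required bound for that term. (A small point: Theorem \ref{prop_1} is stated for real vector fields, but the extension to complex ones by splitting into real and imaginary parts is routine.)

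The gap is the divergence-free piece, and you flagged it yourself: you need the \emph{global} homogeneous estimate
\[
\|V^\perp\cdot\nabla_\L\phi\|_{h^1}\lesssim\|V^\perp\|_{L^p}\|\nabla_\L\phi\|_{L^{p'}} \quad\text{when }div_{\L^*}V^\perp=0,
\]
and there is no reference for it. The HHP result \eqref{estimateHHP} is strictly local (a neighborhood of each point, with compactly supported data), and globalizing a local $h^1$ estimate is not a formality. More seriously, this missing estimate is exactly the special case $div_{\L^*}V=0$ of the very theorem you are proving, so citing it is circular; and it cannot be reduced to the classical CLMS/Dafni result either, because after rewriting $V^\perp\cdot\nabla_\L\phi=\widetilde V\cdot\nabla\phi$ with $\widetilde V_k=\sum_j a_{jk}V^\perp_j$, the quantity $\mathrm{div}\,\widetilde V=\sum_j L_j V^\perp_j$ is not controlled by $div_{\L^*}V^\perp=-\sum_j\overline{L_j}V^\perp_j$ once the coefficients are genuinely complex. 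So the Hodge reduction shifts but does not remove the main difficulty, and the sentence ``should go through globally \ldots{} but the technical details require careful verification'' is exactly where the proof stops being a proof. To close the gap you would essentially have to run an argument like the paper's maximal-function estimate \eqref{fund1} --- at which point you would be proving Theorem \ref{prop_curl=0const} directly, without needing the Hodge splitting at all.
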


The second simplification is a reduction of the inequality \eqref{eq_main1} for general $W \in L^{p'}(\R^N, \C^n)$ and  $div_{\L^{\ast}} V=0$.	

\begin{theorem}\label{prop_div=0const}
	Let $\L = \{L_1,...,L_n\}$ be an elliptic system of complex vector fields on $\R^N$ with complex constant coefficients with $n \geq 2$. If $W \in L^{p'}(\R^N, \C^n)$ and $curl_{\L} \;W \in L^{p'}(\R^N)$ with $1<p<\infty$, then the inequality 
	$$\norma{V \cdot W}_{h^1} \leq C  \norma{V}_{L^p}\left( \norma{W}_{L^{p'}} +  \norma{curl_{\L}W }_{L^{p'}} \right)$$
	holds for all $V \in L^p(\R^N, \C^n)$ which satisfies $div_{\L^{\ast}} V=0$.	
\end{theorem}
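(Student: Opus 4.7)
The plan is to mirror Dafni's Hodge-decomposition strategy from \cite{D5} in the elliptic constant-coefficient setting. Since $L_j$, $L_j^*$, and $\Delta_\L=\sum_j L_j^*L_j$ all have constant coefficients they commute pairwise, and $\Delta_\L$ is a scalar elliptic operator on $\R^N$. From the commutation $L_iL_k^*=L_k^*L_i$ one obtains componentwise
\begin{equation*}
\Delta_\L W_i \;=\; L_i(\mathrm{div}_{\L^*}W) \;+\; \sum_k L_k^*(\mathrm{curl}_\L W)_{ki}.
\end{equation*}
Inverting $\Delta_\L$ at the Fourier-multiplier level (to bypass the need for $\mathrm{div}_{\L^*}W$ to lie in a space on which a direct inverse is defined) yields the Hodge-type decomposition $W=\nabla_\L\psi+R$ with
\begin{equation*}
\psi \;:=\; \Delta_\L^{-1}(\mathrm{div}_{\L^*}W),\qquad R_i \;:=\; \sum_k L_k^*\Delta_\L^{-1}(\mathrm{curl}_\L W)_{ki}.
\end{equation*}
Both $W\mapsto\nabla_\L\psi$ and $\mathrm{curl}_\L W\mapsto R$ are realised as matrices of zero-order Calder\'on--Zygmund multipliers, so each is bounded on $L^{p'}$, and $\mathrm{div}_{\L^*}R=0$ in $\mathcal{S}'$.

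Writing $V\cdot W=V\cdot\nabla_\L\psi+V\cdot R$, the first piece is immediate from Theorem~\ref{prop_curl=0const} with $\mathrm{div}_{\L^*}V=0$,
\begin{equation*}
\|V\cdot\nabla_\L\psi\|_{h^1}\le C\,\|V\|_{L^p}\|\nabla_\L\psi\|_{L^{p'}}\le C\,\|V\|_{L^p}\|W\|_{L^{p'}}.
\end{equation*}

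For the second piece, set $u_{ki}:=\Delta_\L^{-1}(\mathrm{curl}_\L W)_{ki}$, which is antisymmetric in $k,i$. Using that $L_k^*$ is a derivation,
\begin{equation*}
V\cdot R \;=\; \sum_{i,k}L_k^*(V_iu_{ki}) \;-\; \sum_{i,k}(L_k^*V_i)u_{ki}.
\end{equation*}
The first sum equals $\mathrm{div}_{\L^*}\mathbf{Z}$ with $\mathbf{Z}_k:=\sum_i V_i u_{ki}\in L^1$; its contribution to the local maximal function $m_\varphi$ is controlled by transferring the derivative onto $\varphi_t$ and exploiting the cutoff $0<t<1$ together with the $L^{p'}$-bounds on $u_{ki}$. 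After antisymmetrization in $k,i$ the second sum becomes $\tfrac12\sum_{i,k}(L_k^*V_i-L_i^*V_k)u_{ki}$; a further integration by parts (shifting one $L^*$ back onto $u_{ki}$) recasts it as a finite sum of terms of the form $V_j\,L_\ell(\Delta_\L^{-1}\sigma)$ with $\sigma\in L^{p'}$, each of which falls under the scope of Theorem~\ref{prop_curl=0const}.

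The principal obstacle is precisely this last step: H\"older's inequality alone only places $V\cdot R$ in $L^1$, so upgrading to $h^1$ requires genuine use both of $\mathrm{div}_{\L^*}V=0$ and of the explicit Riesz-transform structure $R=L^*\Delta_\L^{-1}(\mathrm{curl}_\L W)$, and one must keep careful track of the constants so that only $\|V\|_{L^p}$ and $\|\mathrm{curl}_\L W\|_{L^{p'}}$ appear on the right. A secondary technicality, the a~priori meaning of $\psi=\Delta_\L^{-1}(\mathrm{div}_{\L^*}W)$, is sidestepped by defining $\nabla_\L\psi$ and $R$ directly as $L^{p'}$-bounded Fourier multipliers acting on $W$ and $\mathrm{curl}_\L W$, without reference to $\psi$ itself.
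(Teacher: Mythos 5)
Your decomposition is genuinely different in structure from the paper's. The paper does not split $W$; instead it builds a potential for $V$ on the divergence-free side: with $U_i:=-E*V_i$ one sets $B:=\mathrm{curl}_\L U$ (an antisymmetric matrix in $W^{1,p}$) and checks that $V_j=\mathrm{div}_{\L^*}B_j$. Then $\overline{V\cdot W}=-\sum_j\mathrm{div}_\L(\overline{\widetilde B_j}W_j)+\sum_{i<j}\overline{\widetilde B_{ij}}(\mathrm{curl}_\L W)_{ij}$ with $\widetilde B_{ij}=B_{ij}-(B_{ij})_{B^t_x}$, and the proof runs the same $M^{\mathrm{loc}}_{W^{-1,s}}$ plus Hardy--Littlewood plus Sobolev--Poincar\'e machinery as in the proof of Theorem \ref{prop_curl=0const}, only now the local maximal operator $M^{\mathrm{loc}}_{W^{-1,s}}$ is applied to $(\mathrm{curl}_\L W)_{ij}\in L^{p'}$ and the Poincar\'e step controls $\nabla B_{ij}\in L^p$. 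Theorem \ref{prop_curl=0const} is never invoked as a black box; the two theorems are parallel arguments, with the scalar potential $\phi$ of $W$ replaced by the antisymmetric matrix potential $B$ of $V$. Your route, decomposing $W=\nabla_\L\psi+R$ and pushing the gradient piece through Theorem \ref{prop_curl=0const}, is an attractive idea and the first half is correct.

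The gap is in the treatment of $V\cdot R$. After the Leibniz step $V\cdot R=\mathrm{div}_{\L^*}\mathbf Z-\sum_{i,k}(L_k^*V_i)u_{ki}$, the remaining term pairs distributional derivatives of $V$ against $u_{ki}$; this product is not a priori defined, and in particular ``integrating by parts to shift one $L^*$ back onto $u_{ki}$'' either undoes the first step (it is literally circular at the level of formal identities), or, when carried out against the test function $\varphi^x_t u_{ki}$, reproduces exactly the two pieces you started from. More seriously, the claimed reduction to ``terms $V_jL_\ell(\Delta_\L^{-1}\sigma)$ falling under the scope of Theorem \ref{prop_curl=0const}'' does not meet the hypotheses of that theorem: the vector field one would need there is $V'=V_je_\ell$, and its divergence $\mathrm{div}_{\L^*}V'=L_\ell^*V_j$ is not controlled in $L^p$ --- the hypothesis $\mathrm{div}_{\L^*}V=0$ only gives $\sum_j L_j^*V_j=0$, not componentwise control. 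The paper's construction avoids this precisely because the local maximal operator $M^{\mathrm{loc}}_{W^{-1,s}}$ is aimed at $\mathrm{curl}_\L W$ (which is in $L^{p'}$ by hypothesis), while the Poincar\'e step acts on the entries $\nabla B_{ij}$ (which are in $L^p$ by Calder\'on--Zygmund bounds for $\partial^2E$). To salvage your approach you would need, at minimum, to run the $M^{\mathrm{loc}}_{W^{-1,s}}$ argument directly on $\varphi_t*(V\cdot R)$ using the antisymmetric potential $u_{ki}$ for $R$, mean-corrected as $\widetilde u_{ki}=u_{ki}-(u_{ki})_{B^t_x}$, rather than appealing to Theorem \ref{prop_curl=0const} --- and at that point you are essentially rediscovering the paper's proof with $B$ replaced by $u$.
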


The conclusion of \eqref{eq_main1} will follow  by a Hodge type decomposition $V=V_{1}+V_{2}$ given by Lemma \ref{hodge} for each $V \in L^{p}(\R^{N},\C^{n})$, in which $div_{\L^{\ast}} V_1 =0$ and $V_{2}=\nabla_{\L}\phi$.



In \cite{CLMS}, the authors proved a type of converse of inequality \eqref{eq01}, called div-curl lemma, that asserts each $f \in H^{1}{(\R^{N})}$ can be written as
$$f=\sum_{j=1}^{\infty}\lambda_{k}f_{k}$$ 
in the sense of distribution, where the sequence $\left\{ \lambda_{k} \right\}_{k} \in \ell^{1}(\R)$ and $f_{k}:=V_{k}\cdot W_{k}$ with $W_{k},V_{k} \in L^{2}(\R^{N},\R^{N})$ satisfying $\rm div \, V_k =0$ and
$\rm curl \, W_k =0$. This result is a direct consequence from the duality $BMO(\R^{N})=(H^{1}(\R^{N}))^{\ast}$ and a characterization of the $BMO$ norm given by
\begin{equation}
\|g\|_{BMO} \approx \sup_{V,W} \int_{\R^{N}} g(x) (V \cdot W)(x)dx,
\end{equation}
where the supremum is taken all vector fields  $W,V \in L^{2}(\R^{N},\R^{N})$ satisfying $\rm div \, V =0$, $\rm curl \, W =0$ and $\|V\|_{L^{2}}, \|W\|_{L^{2}} \leq 1$. So now, let $\L = \left\{L_{1},\dots,L_{n}\right\}$ as in the statement of Theorem A and denote by $(\D C_{\L})^p_{{0,1}}$ the family of all functions which can be written in the form $V \cdot W$, where $V \in L^p (\R^N, \C^n)$ and $W \in L^{p'}(\R^N, \C^n)$ are vector fields satisfying $\norma{V}_{L^p}, \norma{W}_{L^{p^{\prime}}}  \leq 1$
with ${\rm div}_{\L^*} \; V =0$ and $\norma{{\rm curl}_{\L} \; W}_{L^{p'}} \leq 1$. Analogously, we define 
$(\D C_{\L})^p_{{1,0}}$ the family of all functions $V \cdot W$, where $V \in L^p (\R^N, \C^n)$ and $W \in L^{p'}(\R^N, \C^n)$ are vector fields satisfying $\norma{V}_{L^p}, \norma{W}_{L^{p^{\prime}}}  \leq 1$
with $\norma{{\rm div}_{\L^*} \; V}_{L^{p1}} \leq 1$ and $W:=\nabla_{\L}\phi$.

Our second main result is the following:

\begin{alphatheo}\label{teo_principal2}
Let $\L = \{L_1,...,L_n\}$ be an elliptic system of complex vector fields on $\R^N$ with complex constant coefficients with $n \geq 2$. 
If $g \in bmo(\R^N)$, then 
	$$\norma{g}_{bmo} \; \simeq \sup_{f \in (\D C_{\L})^p_{1,0}} \; \left| \int_{\R^N} g(x)f(x)dx \right| \; \simeq \sup_{f \in (\D C_{\L})^p_{0,1}} \; \left|\int_{\R^N} g(x)f(x)dx\right|,$$
for any $1<p<\infty$.
\end{alphatheo}

We recall the dual of $h^{1}(\R^{N})$ can be identified with the space $bmo{(\R^{N})}$ given by the set of locally integrable functions $f$ that satisfy
\begin{equation}\label{bmo}
\|g\|_{bmo}:=\sup_{|B|\leq 1}\fint_{B}|g(x)-g_{B}|dx+ \sup_{|B|> 1}\fint_{B}|g(x)|dx<\infty,
\end{equation}
 where $\displaystyle{g_{B}:=\frac{1}{|B|}\int_{B}g(x)dx}$. 

As a direct consequence of the previous characterization and duality, we announce the following div-curl lemma associate to an elliptic system of complex vector fields. 

\begin{corollary}\label{coro1}
Let $\L = \{L_1,...,L_n\}$ be an elliptic system of complex vector fields on $\R^N$ with complex constant coefficients with $n \geq 2$ and $1<p<\infty$. For each $f \in h^1(\R^N)$ there exist a sequence $\{\lambda_k\}_k \in \ell^1(\C)$ 
and a sequence  $\left\{f_k \right\}_{k} \in (\D C_{\L})^p_{1,0}$ such that
\begin{equation}\label{coroC}
f = \sum_{k=1}^{\infty} \lambda_k f_k,
\end{equation}
in the sense of distributions. The same decomposition holds replacing $(\D C_{\L})^p_{1,0}$ by $(\D C_{\L})^p_{0,1}$.
\end{corollary}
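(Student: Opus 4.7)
My plan is to argue by the standard Hahn--Banach duality technique, combining Theorem B with the identification $(h^1(\R^N))^* = bmo(\R^N)$ due to Goldberg.

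\textbf{Setup.} I would introduce
\[
B := \overline{\Bigl\{\,\sum_{k=1}^{N}\lambda_k h_k \;:\; N\in\N,\ h_k\in(\D C_{\L})^p_{1,0},\ \sum_{k=1}^{N}|\lambda_k|\leq 1\,\Bigr\}}^{\,h^1},
\]
the closed absolutely convex hull of $(\D C_{\L})^p_{1,0}$ in $h^1(\R^N)$. By Theorem A, every $h\in(\D C_{\L})^p_{1,0}$ satisfies $\|h\|_{h^1}\leq C_0$, so $B$ is a bounded, closed, convex, balanced subset of $h^1$ containing the origin. The whole game is to show that $B$ absorbs a neighborhood of $0$ in $h^1$.

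\textbf{Key step (Hahn--Banach).} I claim there exists $\alpha>0$ with $\{f\in h^1:\|f\|_{h^1}\leq\alpha\}\subset B$. Suppose not: then for every $\alpha>0$ I can pick $f_0\in h^1\setminus B$ with $\|f_0\|_{h^1}$ arbitrarily small. Hahn--Banach separation, read off $(h^1)^*=bmo$, yields $g\in bmo(\R^N)$ with
\[
\sup_{h\in B}\,\mathrm{Re}\int gh \;\leq\; 1 \;<\; \mathrm{Re}\int gf_0.
\]
Balancedness of $B$ upgrades the left-hand supremum to $\sup_{h\in B}|\int gh|$, which in particular dominates $\sup_{h\in(\D C_{\L})^p_{1,0}}|\int gh|$; Theorem B then gives $\|g\|_{bmo}\leq C_1$. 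Pairing $g$ against $f_0$ via the duality forces $1<|\int gf_0|\leq C_2\|g\|_{bmo}\|f_0\|_{h^1}\leq C_1C_2\|f_0\|_{h^1}$, contradicting the smallness of $\|f_0\|_{h^1}$ once $\alpha<(C_1C_2)^{-1}$. Hence $\alpha:=(C_1C_2)^{-1}$ (or smaller) works.

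\textbf{Iteration and conclusion.} Given $f\in h^1$, rescale to assume $\|f\|_{h^1}\leq\alpha$, so $f\in B$. Pick a finite sum $s_1=\sum_k\lambda_k^{(1)}h_k^{(1)}$ with $\sum_k|\lambda_k^{(1)}|\leq 1$ and $\|f-s_1\|_{h^1}\leq\alpha/2$. Since $\{g:\|g\|_{h^1}\leq\alpha/2\}\subset\tfrac12 B$, the remainder $r_1:=f-s_1$ lies in $\tfrac12 B$; applying the same approximation to $2r_1\in B$ and dividing by $2$ produces $s_2$ with $\sum_k|\lambda_k^{(2)}|\leq 1/2$ and $\|r_1-s_2\|_{h^1}\leq\alpha/4$. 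Iterating, I get finite sums $s_j$ with $\sum_k|\lambda_k^{(j)}|\leq 2^{-(j-1)}$ and $\|f-\sum_{j'\leq j}s_{j'}\|_{h^1}\leq\alpha\,2^{-j}$, so concatenating all the terms yields $f=\sum_j s_j=\sum_{j,k}\lambda_k^{(j)}h_k^{(j)}$ convergent in $h^1$ (hence in $\S'$), with total $\ell^1$ coefficient bound at most $2$. Undoing the initial rescaling gives the decomposition~\eqref{coroC}. Replacing Theorem~B's first equivalence by its second, verbatim the same argument handles $(\D C_{\L})^p_{0,1}$.

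\textbf{Main obstacle.} There is no substantial new computation: the whole argument is the standard packaging of Hahn--Banach plus duality, already used by Coifman--Lions--Meyer--Semmes in the classical setting. The only delicate point is bookkeeping the constants in the separation so that the bound on $\|g\|_{bmo}$ supplied by Theorem~B actually contradicts the separation hypothesis; conceptually, the argument rides entirely on having Theorem~B's characterization of $bmo$ \emph{against exactly the same class} of div--curl type products that defines $B$.
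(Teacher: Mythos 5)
Your argument is correct and follows the same route as the paper: reduce the decomposition to showing that the closed balanced convex hull of $(\D C_{\L})^p_{1,0}$ absorbs a ball in $h^1$, and deduce this from Theorem~B together with the duality $(h^1)^*=bmo$. The only difference in presentation is that you reprove from scratch, via Hahn--Banach separation and the geometric-series iteration, the two abstract functional-analytic steps that the paper simply cites as Lemmas III.1 and III.2 of Coifman--Lions--Meyer--Semmes.
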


The organization of the paper is as follows. In Section \ref{S1}, we recall some definitions, elliptic estimates and a Hodge  decomposition associated with system of complex vector fields. The Section \ref{S4} is devoted to prove of Theorem A as consequence of the Theorems \ref{prop_curl=0const} and \ref{prop_div=0const}. In the Section \ref{S6}, we present the proof of Theorem B and, in the end of the section, the proof of Corollary \ref{coro1}. 

\bigskip 
\noindent \textbf{Notations}.
Throughout the paper we will use the notation $\Omega \subset \R^{N}$ for an open set and by $B_x^t$ for an open ball $B(x,t)$ centered at $x$ and radius $t>0$ ($B$ denotes a generic ball). {We use the multi-index derivative notation $\partial^\alpha$ to denote $\displaystyle{\frac{\partial^{|\alpha|}}{\partial_{x_1}^{\alpha_1} \partial_{x_2}^{\alpha_2} \dots \partial_{x_N}^{\alpha_N}}}$, where $\alpha=(\alpha_1, \alpha_2, \dots, \alpha_N) \in \Z_+$ and $|\alpha| := \alpha_1 + \alpha_2 + \dots + \alpha_N$. Furthermore, we also use the simplified notation $\partial^k = \left(\partial^\alpha\right)_{|\alpha|=k}$.} We set $S(\R^{N})$ the Schwartz space and $S'(\R^{N})$ the set of tempered distributions. We denote by $W^{k,p}(\Omega)$ the space of distributions in which all (weak) derivatives with order less or equal than $k$ belongs $L^{p}(\Omega)$ and by $W^{-k,p'}(\Omega)$  its dual space. Here, $p'$ denotes the conjugate exponent to $p$ for $1<p<\infty$ given by $\frac{1}{p}+\frac{1}{p'}=1$. The closure of $C^\infty_c(\Omega)$ in $W^{k,p}(\Omega)$ is denoted by $W^{k,p}_0(\Omega)$.
Another basic notation is the Hardy-Little\-wood maximal operator defined for functions $f \in L^1_{{\rm loc}}(\R^N)$ given by
$$
Mf(x) := \; \sup_{x\in B} \fint_B |f(y)|dy, \quad {\rm a.e.} \ x\in\R^N,
$$
where the supremum is taken over all balls that contain $x$ and  $\fint_B:=\frac{1}{|B|}\int_{B}$, with $|B|$ the Lebesgue measure of $B$.  It is well known that $M:f \mapsto Mf$ is a bounded operator in $L^p(\R^N)$ for $1<p\le \infty$, and for $f\in L^\infty(\R^N)$ we have the trivial estimate $Mf(x) \le \|f\|_{\infty}$, almost everywhere  $x\in\R^N$.



\section{Elliptic system of complex vector fields}\label{S1}

Consider n complex vector fields $\L:=\left\{L_1, ... , L_n\right\}$, $n \geq 2$, with constant complex coefficients in $\R^N$ for $N \geq 2$. We will always assume that
\begin{enumerate}
\item[(a)] $\left\{L_1, .... , L_n\right\}$ are everywhere linearly independent;
\item[(b)] the system $\left\{L_1, ... , L_n\right\}$ is elliptic.
\end{enumerate}
We recall this means that, for any {\it real} 1-form $\omega$ satisfying  $\left\langle \omega, L_{j}\right\rangle=0$ for all $j=1,\dots,n$ implies $\omega=0$.
Consequently, the number n of vector fields must satisfy $N/2 \leq n \leq N$.
{Alternatively, (b) is equivalent to saying that the real homogeneous differential operator with order two 
\begin{equation*}
\Delta_{\L}\; :=\;L_1^* L_1+\cdots+L_n^* L_n
\end{equation*}
is elliptic in the classical sense, where $L^{*}_{j}=-\overline{L_{j}}$ is the formal adjoint of $L_{j}$.} 
We remark that choosing an appropriate generators and reordering the coordinates $\left\{x_1, x_2,...,x_N\right\}$, we always may assume without loss of generality that the vector fields $\{L_1,...,L_n\}$ have the form
 \begin{equation}\label{Lsimples}
L_{j}=\frac{\partial}{\partial x_{j}}+\sum_{k=1}^{m}a_{jk}\frac{\partial}{\partial x_{{n+k} }}, 
 \end{equation}
for $j=1,...,n$ with $m:=N-{n}$. {The ellipticity of $\Delta_{\mathcal{L}}$ means that there exists $C>0$ such that 
\begin{align*}
\sum_{j=1}^{n}\left| \xi_{j}+\sum_{k=1}^{m}a_{jk}\xi_{n+k}\right|^{2} \geq C |\xi|^{2}, \quad \forall \, \xi \in \R^{N}.
\end{align*}}
Note that $\Delta_{\L}$ is a slight variation of Laplacian operator and it has a fundamental solution $E(x)$ i.e.  $\Delta_{\L} E =\delta_{0}$
 that is locally integrable tempered distribution homogeneous of degree $-N+2$ for $N \geq 3$ and $\log |x|$ type for $N=2$. In particular, $\partial^{2}E$ is a bounded operator from $L^{p}(\R^{N})$ to itself for $1<p<\infty$.   

 An important class of elliptic system satisfying \eqref{Lsimples}  is given by
\begin{align*}
L_{j}=\frac{\partial}{\partial x_{j}}+i\frac{\partial}{\partial x_{r+j}}, \,\,\text{for}\,\, j=1,...,r \quad \text{and} \quad L_{2r+j}=\frac{\partial}{\partial x_{2r+j}}, \,\,\text{for}\,\, j=1,...,s
\end{align*}
where $N=2r+s$. When $s=0$ we obtain the Cauchy-Riemman system in  $\C^{r}\cong \R^{2r}$. Note that, in this particular case, $\Delta_{\L}$ is a multiple of Laplacian operator $\Delta$ (see \cite{BCH}).

\begin{lemma}\label{calderon}
Let $\L=\{L_1,L_2,..., L_n\}$ be an elliptic system of complex vector fields on $\R^{N}$ with constant complex coefficients and $1<p<\infty$. Then there exists $C>0$ such that 
\begin{equation}\label{eliptic2}
\|\nabla \phi\|_{L^{p}}\leq C \|\nabla_{\L }\phi\|_{L^{p}}, \quad \forall \,\,\nabla \phi \in L^{p}(\R^{N}).
\end{equation}
\end{lemma}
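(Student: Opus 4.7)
The plan is to derive the inequality by inverting the elliptic operator $\Delta_\L$ and invoking Calder\'on--Zygmund (or Mikhlin multiplier) theory on the Fourier side. Since each $L_j$ has constant coefficients, its Fourier symbol is a linear form $i\sigma_j(\xi)$, and the ellipticity inequality stated right after \eqref{Lsimples} translates to
$$\sum_{j=1}^n |\sigma_j(\xi)|^{2} \;\ge\; c\,|\xi|^{2}, \qquad \xi\in\R^N\setminus\{0\}.$$
Consequently, for each $k\in\{1,\dots,N\}$ we have the algebraic identity
$$i\xi_k \;=\; \sum_{j=1}^n \frac{\xi_k\,\overline{\sigma_j(\xi)}}{\sum_{i=1}^n|\sigma_i(\xi)|^{2}}\,\bigl(i\sigma_j(\xi)\bigr) \;=\; \sum_{j=1}^n m_{kj}(\xi)\,\bigl(i\sigma_j(\xi)\bigr),$$
where each $m_{kj}$ is homogeneous of degree $0$ and smooth away from the origin. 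First I would apply the Mikhlin--H\"ormander multiplier theorem to obtain $L^p$-boundedness, for every $1<p<\infty$, of the operator $T_{kj}$ with symbol $m_{kj}$.

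Second, I would use these multipliers to get the pointwise relation on the Fourier side. Writing this back on the physical side yields, for $\phi\in\mathcal{S}(\R^N)$, the representation
$$\partial_{x_k}\phi \;=\; \sum_{j=1}^n T_{kj}(L_j\phi),$$
from which the estimate $\|\partial_{x_k}\phi\|_{L^p} \le C\sum_{j}\|L_j\phi\|_{L^p} = C\|\nabla_{\L}\phi\|_{L^p}$ is immediate. Equivalently, one can appeal directly to the fundamental solution $E$ of $\Delta_\L$ flagged in the excerpt: writing $\phi = E\ast \Delta_\L\phi = \sum_j E \ast L_j^{\ast}L_j\phi$ and differentiating once in $x_k$ converts the control into convolutions with kernels of the form $\partial_k L_j^{\ast} E$, which are second derivatives of $E$; the excerpt already records that such $\partial^2 E$ convolutions are bounded on $L^p$ for $1<p<\infty$.

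The step I expect to require most care is the extension from Schwartz $\phi$ to the class of distributions for which only $\nabla\phi\in L^{p}$ is assumed, since $\phi$ itself need not lie in $L^p$ nor be unique (it is determined only up to an additive constant). I would handle this by mollification: set $\phi_\varepsilon:=\phi\ast\rho_\varepsilon$ with a standard mollifier $\rho_\varepsilon$, so that $\phi_\varepsilon\in C^{\infty}\cap W^{1,p}_{\mathrm{loc}}$ and
$$\nabla\phi_\varepsilon \;\longrightarrow\; \nabla\phi, \qquad L_j\phi_\varepsilon=L_j\phi\ast\rho_\varepsilon \;\longrightarrow\; L_j\phi \qquad \text{in } L^p(\R^N).$$
Applying the already proven estimate to $\phi_\varepsilon$ and passing to the limit by continuity of the $L^p$-norms yields the inequality for $\phi$. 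Uniqueness modulo constants is not an issue because both sides of the desired estimate only involve derivatives of $\phi$. No new difficulty appears in dimension $N=2$, where $E$ is of $\log|x|$-type, since again only second derivatives of $E$ enter, and those are still Calder\'on--Zygmund kernels.
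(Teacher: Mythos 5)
Your second argument is precisely the paper's proof: the paper writes $\nabla\phi=\nabla\,\mathrm{div}_{\L^*}(E*\nabla_\L\phi)=(\nabla\,\mathrm{div}_{\L^*}E)*\nabla_\L\phi$ and then invokes the $L^p$-boundedness of convolution with $\partial^2 E$, which is exactly your representation $\partial_{x_k}\phi=\sum_j(\partial_k L_j^*E)*L_j\phi$. Your first route via the Mikhlin--H\"ormander multipliers $m_{kj}(\xi)=\xi_k\overline{\sigma_j(\xi)}/\sum_i|\sigma_i(\xi)|^2$ is the Fourier-side formulation of the same inversion of $\Delta_\L$, so it is not a genuinely different argument. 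The mollification step at the end is a worthwhile technical refinement that the paper leaves implicit, and both routes are sound.
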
 

\begin{proof}
 Using the fundamental solution of $\Delta_{\L}$ and that the vector fields have constant coefficients, we may write $\nabla \phi=\nabla div_{\L^{\ast}}{\left( E \ast \nabla_{\L}\phi\right) = (\nabla div_{\L^*} \; E) \ast \nabla_{\L}\phi}$ and since $\partial^{2}E$ is a bounded operators on $L^{p}(\R^{N})$ for $1<p<\infty$ the estimate \eqref{eliptic2} follows. \qed
\end{proof}

Next we present a Hodge decomposition for vector fields in our div-curl setting:

\begin{lemma}\label{hodge}
Let $\L=\{L_1,L_2,..., L_n\}$ be an elliptic system of complex vector fields on $\R^{N}$ with constant complex coefficients. Each $V \in L^{p}(\R^{N},\C^{n})$ for $1<p<\infty$ can be decomposed as
$$V=V_{1}+V_{2},$$ 
with $div_{\L^{\ast}} V_1 =0$, $V_{2}=\nabla_{\L}\varphi_{2}$. Moreover
\begin{equation}\label{eliptic}
\|V_{i}\|_{L^{p}}\lesssim \|V\|_{L^{p}}, \quad \text{for}\,\,\, i=1,2.
\end{equation}
\end{lemma}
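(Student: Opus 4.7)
The natural strategy is to build the $\nabla_{\L}$-part $V_2$ using the fundamental solution $E$ of $\Delta_{\L}$ and then set $V_1:=V-V_2$. Formally, I would take
\begin{equation*}
\varphi_2 \;:=\; E \ast \operatorname{div}_{\L^*} V, \qquad V_2 \;:=\; \nabla_{\L}\varphi_2,
\end{equation*}
so that
\begin{equation*}
(V_2)_i \;=\; L_i\Bigl(E \ast \sum_{j=1}^{n} L_j^* V_j\Bigr) \;=\; \sum_{j=1}^{n} \bigl(L_i L_j^* E\bigr) \ast V_j,
\end{equation*}
which presents each component of $V_2$ as a finite sum of $\partial^{2}E$-type operators acting on the components of $V$. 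The distributional identity $\operatorname{div}_{\L^*}(\nabla_{\L}\varphi_2)=\Delta_{\L}\varphi_2=\operatorname{div}_{\L^*}V$ then gives $\operatorname{div}_{\L^*}V_1=0$, as required.

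For the $L^p$ bounds, I would invoke the fact, already recorded in the paragraph following \eqref{Lsimples}, that $\partial^{2}E\ast$ is bounded on $L^p(\R^N)$ for every $1<p<\infty$. Applied componentwise to the formula above, this yields $\norma{V_2}_{L^p}\lesssim \norma{V}_{L^p}$, and then $\norma{V_1}_{L^p}\leq \norma{V}_{L^p}+\norma{V_2}_{L^p}\lesssim \norma{V}_{L^p}$ by the triangle inequality, which is \eqref{eliptic}.

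The point requiring some care is that for $V\in L^p(\R^N,\C^n)$ the object $E\ast \operatorname{div}_{\L^*}V$ is not literally a convolution of a function with an $L^1_{\rm loc}$ kernel, since $E$ is unbounded at infinity (it grows like $|x|^{-N+2}$ for $N\geq 3$ or $\log|x|$ for $N=2$). My plan is therefore to \emph{define} the components $(V_2)_i$ directly as the Calder\'on--Zygmund type operators with kernel $L_iL_j^*E$ applied to $V_j$, whose $L^p$-boundedness is the statement cited above. The claimed identities $V_2=\nabla_{\L}\varphi_2$ and $\operatorname{div}_{\L^*}V_1=0$ would then be verified first for $V\in C_c^\infty(\R^N,\C^n)$, where $E\ast\operatorname{div}_{\L^*}V$ is a smooth function (modulo the harmless logarithmic growth when $N=2$) and the identity $\Delta_{\L}(E\ast u)=u$ holds pointwise, and then extended to the whole of $L^p$ by density together with the continuity of $\operatorname{div}_{\L^*}\colon L^p\to W^{-1,p}$.

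The main obstacle is thus not the ellipticity theory itself, which reduces cleanly to the $L^p$-boundedness of $\partial^{2}E\ast$, but this distributional bookkeeping needed to make $\varphi_2$ meaningful for a generic $V\in L^p$. The $N=2$ case, where $E$ is only of logarithmic type, contributes no extra difficulty, because only the second derivatives of $E$ appear in the operator that defines $V_2$.
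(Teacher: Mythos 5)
Your proposal is correct and follows essentially the same route as the paper: set $\varphi_2 = E \ast \operatorname{div}_{\L^*}V$, $V_2=\nabla_{\L}\varphi_2$, $V_1=V-V_2$, and invoke $L^p$-boundedness of $\partial^2 E\,\ast$. The paper simply calls this "standard," while you supply the extra bookkeeping about interpreting $V_2$ componentwise through the Calder\'on--Zygmund operators $(L_iL_j^*E)\ast$ and extending from $C_c^\infty$ by density, which is a reasonable way to make the same argument rigorous.
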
 

\begin{proof}
The proof is standard. Using the fundamental solution of $\Delta_{\L}$, we may define
$V_{2}:= \nabla_{\L} \varphi_{2}$ with $\varphi_{2}= E \ast div_{\L^*}V$ and $V_{1}:=V-V_{2}$. Clearly 
$$div_{\L^*}V_{2}=div_{\L^*}\nabla_{\L} \varphi_{2}=\Delta_{\L}\varphi_{2}=div_{\L^*}V,$$
thus $div_{\L^*}V_{1}=0$. The estimate \eqref{eliptic} follows directly from $\partial^{2}E$ is a bounded operator from $L^{p}(\R^N)$ to itself for $1<p<\infty$. \qed
\end{proof}

Now we state an important \textit{a priori} estimate that will be useful in this work.
 
 \begin{lemma}\label{lemma_normaL_W}
Consider  $\L = \{L_1,...,L_n\}$ be a system of complex vector fields on $\R^N$ with complex constant coefficients {and $1<r<\infty$}. 
Then for each ball $B \subset \R^N$, there is a constant $C=C(B, \L)>0$ such that 
\begin{equation}\label{eqforte}
\norma{\nabla g}_{W^{-1,r}(B)} \leq C \sum_{j=1}^n \norma{L_j^* g}_{W^{-1,r}(B)}.
 \end{equation}
 \end{lemma}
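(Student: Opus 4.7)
The plan is to dualize and invoke the fundamental solution $E$ of the (elliptic) operator $\Delta_\L=\sum_j L_j^*L_j$. By the definition of the negative Sobolev norm and integration by parts, it is enough to show that for every $\phi\in C_c^\infty(B)$ with $\norma{\phi}_{W^{1,r'}(B)}\le 1$ and each $i=1,\dots,N$,
$$|\langle g,\partial_i\phi\rangle|\le C\sum_j\norma{L_j^*g}_{W^{-1,r}(B)}.$$
My approach is to rewrite $\partial_i\phi=\sum_j L_j\psi_j$ with $\psi_j\in W^{1,r'}_0(B)$ and $\norma{\psi_j}_{W^{1,r'}}\lesssim\norma{\phi}_{W^{1,r'}}$; integration by parts then gives $\langle g,\partial_i\phi\rangle=\sum_j\langle L_j^*g,\psi_j\rangle$ and the desired estimate is immediate.

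The candidate $\psi_j$'s come from the fundamental solution. Since the coefficients are constant, $L_j$ and $L_j^*$ commute, so $\Delta_\L=\sum_j L_j L_j^*$ as well, and for $\phi\in C_c^\infty(\R^N)$ one has
$$\phi=\Delta_\L(E\ast\phi)=\sum_j L_j L_j^*(E\ast\phi),\qquad \partial_i\phi=\sum_j L_j\bigl((\partial_i L_j^*E)\ast\phi\bigr)=:\sum_j L_j\psi_j^0.$$
Each kernel $\partial_i L_j^*E$ is homogeneous of degree $-N$ with vanishing spherical mean (by ellipticity of $\Delta_\L$), hence a classical Calder\'on--Zygmund convolution kernel, so the associated operator is bounded on $W^{1,r'}(\R^N)$ for $1<r'<\infty$, giving $\norma{\psi_j^0}_{W^{1,r'}(\R^N)}\lesssim\norma{\phi}_{W^{1,r'}(B)}$.

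The main obstacle is that $\psi_j^0$ is not compactly supported in $B$. I would localize with a cutoff $\chi\in C_c^\infty(B)$ equal to $1$ on a neighborhood of $\supp\phi$, set $\psi_j:=\chi\psi_j^0\in W^{1,r'}_0(B)$, and use the Leibniz rule (together with $\sum_j L_j\psi_j^0=\partial_i\phi$) to write
$$\partial_i\phi=\sum_j L_j\psi_j-\sum_j(L_j\chi)\psi_j^0.$$
Pairing with $g$, the first sum is controlled by $\sum_j\norma{L_j^*g}_{W^{-1,r}(B)}\norma{\psi_j}_{W^{1,r'}_0(B)}$; the commutator remainder pairs $g$ against a smooth function compactly supported in $B$.

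The hard part will be estimating this commutator uniformly in $\phi$, since a single $\chi$ cannot equal $1$ throughout $B$. I plan to close the argument using the Ne\v{c}as--Poincar\'e inequality $\norma{g-g_B}_{L^r(B)}\le C\norma{\nabla g}_{W^{-1,r}(B)}$ (noting that both sides of the desired inequality are invariant under $g\mapsto g+\mathrm{const}$) combined with either an absorption step or a contradiction/compactness argument that exploits elliptic regularity of $\Delta_\L$ together with the compactness of bounded subsets of $L^r(B)$ to rule out concentration and extract the constant $C=C(B,\L)$.
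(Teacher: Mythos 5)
Your decomposition $\partial_i\phi=\sum_j L_j\psi_j^0$ with $\psi_j^0=(\partial_i L_j^*E)\ast\phi$, together with the Calder\'on--Zygmund bound $\norma{\psi_j^0}_{W^{1,r'}}\lesssim\norma{\phi}_{W^{1,r'}}$, is precisely the paper's construction (the paper calls $\psi_j^0$ by $h_{ij}$). Where you diverge is at the pairing step: the paper simply writes $\int_B g\,\overline{L_j h_{ij}}=\langle L_j^*g,\chi_B h_{ij}\rangle$ and bounds this by $\norma{L_j^*g}_{W^{-1,r}(B)}\norma{h_{ij}}_{W^{1,r'}(B)}$, i.e.\ it treats the restriction $h_{ij}|_B$ as an admissible dual test function even though $h_{ij}$ does not vanish near $\partial B$ and the paper's stated definition of $\norma{\cdot}_{W^{-1,r}(B)}$ only tests against $C_c^\infty(B)$. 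You are right to be uneasy about this, and your cutoff $\chi$ is an honest attempt to address it.

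The gap is that your repair is not finished. The commutator $\sum_j(L_j\chi)\psi_j^0$ must be controlled by $\sum_j\norma{L_j^*g}_{W^{-1,r}(B)}$, and neither route you suggest is carried out. Absorption fails as stated: $\norma{\nabla\chi}_{L^\infty}$ is at best $O(1)$ and degenerates as $\supp\phi$ approaches $\partial B$, so the commutator does not come with a small prefactor; the Ne\v{c}as inequality would only convert it into an uncontrolled multiple of $\norma{\nabla g}_{W^{-1,r}(B)}$, which cannot then be absorbed into the left-hand side. The compactness alternative would need a genuine Peetre--Tartar-type scheme (a compact operator plus an injectivity statement for the limit problem), and ``compactness of bounded subsets of $L^r(B)$'' as written is false. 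So although you correctly reproduce the paper's key idea and even isolate a point the paper leaves tacit, the proposal does not yet establish \eqref{eqforte}: the commutator estimate is the missing ingredient, and it is not a small technicality.
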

 
 We recall that $\displaystyle{\norma{\nabla g}_{W^{-1,r}(B)}:=\sum_{j=1}^{N}\norma{\partial_{x_{j}} g}_{W^{-1,r}(B)}}$, where
 $$\norma{ \partial_{x_i} g}_{W^{-1,r}(B)} = \sup_{\substack{\norma{ u}_{W^{1,r'}(B)} \leq 1\\u \in C^\infty_c(B)}} |\inner{ g}{\partial_{x_i}u}|= \sup_{\substack{\norma{ u}_{W^{1,r'}(B)} \leq 1\\u \in C^\infty_c(B)}} \left| \int g(x) \; \overline{\partial_{x_i} u(x)}dx \right|.$$

 \begin{proof}
 	Using the fundamental solution of $\Delta_{\L}$ and since the vector fields has constant coefficients, we may write   
 	$\partial_{x_i} u = \sum_{j=1}^{n} {L_j}h_{ij}$ with $h_{ij}:=\partial_{x_i} {L_j^*}E \ast u $. Thus,
 
 	\begin{align*}
 		\left| \int_B g(x) \; \overline{\partial_{x_i} u(x)}dx \right| 
		\leq \sum_{j=1}^{n} \left| \int_B g \;  \overline{L_jh_{ij}(x)}dx \right|
 		&= \sum_{j=1}^{n} \left| \langle{L_j^*}g, \chi_{B}h_{ij} \rangle  \right| \\
 		&\leq \sum_{j=1}^{n} \norma{{L_j^*} g}_{W^{-1,r}(B)} \norma{h_{ij}}_{W^{1,r'}(B)}.
 	\end{align*}
As $\displaystyle{h_{ij}=\sum_{k=1}^{N}-\overline{a_{kj}} \left(\partial^{2}_{x_{i}x_{k}}E \ast u \right)}$ and noting that $\|\partial^{2}_{x_{i}x_{k}}E \ast u\|_{L^{r'}}\leq C_{ik}\|u\|_{L^{r'}}$,  	
we have
 	\begin{eqnarray*}
 		\left| \int_B g(x) \; \overline{\partial_{x_i} u(x)}dx \right| 
 		\lesssim \sum_{j=1}^{n} \norma{{L_j^*} g}_{W^{-1,r}(B)} \norma{u}_{W^{1,{r'}}(B)}
 	\end{eqnarray*}
for all $u \in C^\infty_c(B)$ that implies \eqref{eqforte}. \qed
 \end{proof}



\section{Proof of Theorem A}\label{S4}

In order to obtain the proof of Theorem A, we assume the validity of Theorems \ref{prop_curl=0const} and \ref{prop_div=0const}.  Using the Hodge decomposition from Lemma \ref{hodge}, we may write $V=V_{1}+V_{2}$ and  $W=W_{1}+W_{2}$ with 
$$div_{\L^*} V_1 = div_{\L^*} W_2 = 0 \text{ and } V_2 = \nabla_{\L} \phi_2,  W_1 = \nabla_{\L} \phi_1,$$ 
in the sense of distributions, for some {$\phi_1 \in L^{p'}(\R^N)$ and $\phi_2 \in L^{p}(\R^N)$}. 
Then,
	$$V \cdot W = V_1 \cdot W + V_2 \cdot W_1 + V_2 \cdot W_2,$$
and from Theorem \ref{prop_div=0const} we have

	\begin{eqnarray*}
		\norma{V_{1} \cdot W}_{h^1} 
		&\lesssim& \norma{V_1}_{L^p}\left( \norma{W}_{L^{p'}} + \norma{curl_{\L} W}_{L^{p'}}\right) \\
		&\lesssim& \norma{V}_{L^p}\left( \norma{W}_{L^{p'}} + \norma{curl_{\L} W}_{L^{p'}}\right) 
			\end{eqnarray*}
since $div_{\L^*} V_1 = 0$ and from Theorem \ref{prop_curl=0const} we have
	\begin{eqnarray*}
		\norma{V_{2} \cdot W_{1}}_{h^1} 
		&\lesssim& \norma{W_{1}}_{L^{p'}}\left( \norma{V_{2}}_{L^{p}} + \norma{div_{\L^{}} V_{2}}_{L^{p}}\right) \\
		&=& \norma{W_{1}}_{L^{p'}}\left( \norma{V_{2}}_{L^{p}} + \norma{div_{\L^{}} V}_{L^{p}}\right) \\
		&\lesssim& \norma{W}_{L^{p'}}\left( \norma{V}_{L^{p}} + \norma{div_{\L^{}} V}_{L^{p}}\right) 		
			\end{eqnarray*}
since $W_{1}=\nabla_{\L}\phi_{1}$ and
	\begin{eqnarray*}
		\norma{V_{2} \cdot W_{2}}_{h^1} 
		&\lesssim& \norma{V_{2}}_{L^{p}}\left( \norma{W_{2}}_{L^{p'}} + \norma{div_{\L^{*}} W_{2}}_{L^{p}}\right) \\
		&\lesssim& \norma{V_{2}}_{L^{p}}\norma{W_{2}}_{L^{p'}}  \\
		&\lesssim& \norma{V}_{L^{p}}\norma{W}_{L^{p'}} 
	\end{eqnarray*}
since $V_{2}=\nabla_{\L}\phi_{2}$ and $div_{\L^{*}} W_{2}=0$. Combining the previous estimates, we obtain the desired estimate.
\begin{flushright}
	\qed
\end{flushright}

Fixed  $\varphi \in C_c^{\infty}(B(0,1))$ with $\varphi \geq 0$ and $\int \varphi =1$, denote for each $x \in \R^N$ and $ t>0$ the function $\varphi^x_t(y) := \dfrac{1}{t^N} \varphi \left(\frac{x-y}{t}\right)$. Given $1<s\leq \infty$ and $f \in W^{-1,s}_{loc}(\R^N)$, we define by 
$M^{loc}_{W^{-1,s}}f(x)$ a local maximal operator as the smaller constant $C>0$ which satisfies
\begin{equation}
|\inner{f}{\varphi^x_t(\phi-\phi_{B^x_t})}| \leq C \left( \fint_{B(x,t)} | \bigtriangledown\phi|^{s'} \right)^{\frac{1}{s'}},
\end{equation}
for all $0<t<1$ and  $\phi \in W^{1,s'}_{loc}(\R^N)$. The boundedness of $M^{loc}_{W^{-1,s}}$ on $L^{p}(\R^{N})$ was proved by Dafni in \cite{D5}, precisely:

\begin{lemma}\label{lemma_bound_M}
	If $1<s<p^*$ for $1<p<N$ or $1<s<\infty$ for $p \geq N$ then there exists $C=C(p,s,N)>0$ such that $\norma{M^{loc}_{W^{-1,s}}f}_{L^p} \leq C \norma{f}_{L^p}$, for all $f \in L^{p}(\R^{N})$.
\end{lemma}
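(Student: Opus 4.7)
\medskip

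The plan is to establish the pointwise estimate
$$M^{loc}_{W^{-1,s}}f(x) \lesssim \bigl(M(|f|^r)(x)\bigr)^{1/r}$$
for a suitable $r \in (1,p)$, from which the $L^p$-bound follows immediately by the Hardy-Littlewood maximal theorem on $L^{p/r}$ (which applies precisely because $p/r > 1$).

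For the pointwise step, fix $x \in \R^N$, $0<t<1$, and $\phi \in W^{1,s'}_{loc}(\R^N)$. Since $\varphi^x_t$ is supported in $B^x_t$ with $\|\varphi^x_t\|_{\infty}\lesssim t^{-N}$, H\"older's inequality with conjugate exponents $r$ and $r'$ gives
$$\bigl|\langle f,\, \varphi^x_t(\phi - \phi_{B^x_t})\rangle\bigr| \;\lesssim\; t^{-N}\, \|f\|_{L^r(B^x_t)}\, \|\phi - \phi_{B^x_t}\|_{L^{r'}(B^x_t)}.$$
The Sobolev-Poincar\'e inequality then furnishes
$$\|\phi - \phi_{B^x_t}\|_{L^{r'}(B^x_t)} \;\lesssim\; t\,|B^x_t|^{1/r'} \left(\fint_{B^x_t}|\nabla\phi|^{s'}\right)^{1/s'},$$
valid whenever $r' \leq (s')^{*}$ in the case $s'<N$, and for any finite $r'$ when $s'\geq N$. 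Combined with $\|f\|_{L^r(B^x_t)} \leq |B^x_t|^{1/r}\bigl(M(|f|^r)(x)\bigr)^{1/r}$ and the identity $|B^x_t|^{1/r+1/r'}\sim t^N$, the three factors of $t$ collapse to a net factor of $t\leq 1$, so one reads off
$$M^{loc}_{W^{-1,s}}f(x) \;\lesssim\; \bigl(M(|f|^r)(x)\bigr)^{1/r}.$$

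The delicate point, and what I expect to be the main obstacle, is the compatibility of the exponents. Two constraints compete: $r<p$ for the boundedness of $M$ on $L^{p/r}$, and, when $s'<N$, $r\geq sN/(N+s)$ (equivalent to $r'\leq (s')^{*}$) for the Sobolev-Poincar\'e step. A direct rearrangement of $sN/(N+s)<p$ gives $s(N-p)<pN$, which in the regime $1<p<N$ is precisely $s<p^{*}=Np/(N-p)$, while for $p\geq N$ the inequality holds for every finite $s$ (and the Sobolev restriction may be dropped altogether when $s'\geq N$). Thus the hypothesis exactly guarantees the existence of an admissible $r\in (\max\{1, sN/(N+s)\},\, p)$. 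With such $r$ fixed, raising the pointwise bound to the $p$-th power and integrating yields
$$\|M^{loc}_{W^{-1,s}}f\|_{L^p}^{p} \;\lesssim\; \bigl\|M(|f|^r)\bigr\|_{L^{p/r}}^{p/r} \;\lesssim\; \bigl\||f|^r\bigr\|_{L^{p/r}}^{p/r} = \|f\|_{L^p}^{p},$$
which is the desired estimate.
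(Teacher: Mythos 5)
The paper does not actually prove this lemma; it merely cites Dafni \cite{D5} for the boundedness of $M^{loc}_{W^{-1,s}}$ on $L^p$, so there is no in-paper argument to compare against. Your proof is correct and is essentially the argument one would expect the reference to contain: you dominate the pairing $\langle f, \varphi^x_t(\phi-\phi_{B^x_t})\rangle$ via H\"older with an auxiliary exponent $r$, control $\|\phi-\phi_{B^x_t}\|_{L^{r'}(B^x_t)}$ through Sobolev--Poincar\'e, collapse the powers of $t$ to obtain $M^{loc}_{W^{-1,s}}f \lesssim \bigl(M(|f|^r)\bigr)^{1/r}$ pointwise, and finish with the $L^{p/r}$-boundedness of the Hardy--Littlewood maximal operator. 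The exponent bookkeeping is right: the Sobolev--Poincar\'e constraint $r' \le (s')^*$ (for $s'<N$) is equivalent to $r \ge sN/(N+s)$, and the requirement that $\bigl(\max\{1,\,sN/(N+s)\},\,p\bigr)$ be nonempty reduces exactly to $s<p^*$ when $p<N$ and is automatic when $p\ge N$, which is precisely the stated hypothesis. One could add a sentence noting that when $s'\ge N$ one applies Sobolev--Poincar\'e with some intermediate exponent $q<N$ satisfying $q^*=r'$ and then passes to the $L^{s'}$ average by Jensen, but you already flag this case and the fix is routine.
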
	

We recall that for each $u \in W^{1,p}(\R^{N})$ with $1\leq p<N$ there exists a constant $C=C(N,p)>0$ such that
	\begin{equation}
		\left( \fint_{B} \left|\dfrac{1}{r_{B}}\left(u-u_{B}\right)\right|^{p^{\ast}} \; \right)^{\frac{1}{p^{\ast}}} \leq C \left( \fint_{B} |\nabla u|^p \right)^{\frac{1}{p}}
	\end{equation}
for any ball $B$ where $r_{B}$ is its radius. This inequality is known as Sobolev-Poincar\'e inequality (see  \cite[Theorem 3, pp. 265]{E}). 

\subsection{Proof of Theorem \ref{prop_curl=0const}}
	Let $\phi$ a function such that $\nabla_{\L} \phi \in L^{p'}(\R^N)$ that is equivalent to $\nabla \phi \in L^{p'}(\R^N)$ from Lemma \ref{calderon}. For each $x \in \R^N$ and $ 0<t<1$ we define
	$$\Phi_t^x(y) := \varphi_t^x(y)(\phi(y)-\phi_{B_x^t})$$
that is supported on $B(x,t)$. From the definition of $div_{\L^*} \; V$, we have $$\inner{div_{\L^*} \; V}{ \Phi^x_t} 
			= \sum_{j=1}^{n}  \langle L_j^* V_j \; \overline{\Phi^x_t} \rangle
			= \int_{B(x,t)} V \cdot \overline{\nabla_{\L} \Phi^x_t}
$$ and taking the product
\begin{align*}
\nabla_{\L} \Phi^x_t(y)&= \nabla_{\L} \left[\varphi_t^x(y)(\phi(y)-\phi_{B_x^t}) \right] \\
&=  \left[ -\dfrac{1}{t^{N+1}} \nabla_{\L} \varphi\left(\dfrac{x-y}{t}\right).(\phi(y)-\phi_{B_x^t}) + \varphi_t^x(y) \; \nabla_{\L} \phi(y) \right]
\end{align*}

that implies
	
		\begin{eqnarray}\label{eq_conv}
			\varphi_t*( V \cdot \nabla_{\L} \phi)(x) &=& \overline{\inner{div_{\L^*} \; V}{ \Phi^x_t}}  \\
			&+& \nonumber \dfrac{1}{t^{N+1}} \int_{B(x,t)} \left[ \nabla_{\L} \varphi\left(\dfrac{x-y}{t}\right) \right] \cdot \left[ \overline{V(y)} ({\phi}(y)-{\phi_{B_x^t}}) \right] dy.
		\end{eqnarray}
		Let $1<\alpha<p$, $1<\beta< p'$ satisfying $\dfrac{1}{\alpha} + \dfrac{1}{\beta} = 1 +\dfrac{1}{N}$. Note that $\beta^*=\alpha'$ and $\beta<N$. 
We point out that $\phi \in L^{\alpha'}_{loc}(\R^{N})$. In fact, if $1<p'<N$ and $\nabla \phi \in L^{p'}(\R^{N})$ then by Sobolev-Gagliardo-Nirenberg inequality $\phi \in L^{p'_{\ast}}(\R^{N})$ with
$$\frac{1}{p'_{\ast}}:=\frac{1}{p'}-\frac{1}{N}<\frac{1}{\beta}-\frac{1}{N}=\frac{1}{\alpha'}$$
that implies $\alpha'<p^{'}_{\ast}$ and consequently $\phi \in L^{\alpha'}_{loc}(\R^{N})$. Otherwise, if $p' \geq N$ then $\phi \in L^{q}_{loc}(\R^{N})$ for any $1 \leq q <\infty$.  Applying the H\"older's inequality and the Sobolev-Poincar\'e inequality,  the second term in \eqref{eq_conv} can be controlled by
\begin{align*}
 \dfrac{\norma{\nabla_{\L} \varphi}_{L^{\infty}}}{t^{N+1}} \int_{B(x,t)} \left| \overline{V(y)} ({\phi}(y)-{\phi_{B_x^t}}) \right| dy
			&\lesssim \left( \fint_{B(x,t)} \left| V(y)\right|^{\alpha} dy \right)^{\frac{1}{\alpha}} \; \left( \fint_{B(x,t)} \left|\dfrac{1}{t}(\phi(y)-\phi_{B_x^t}) \right|^{\alpha'} dy \right)^{\frac{1}{\alpha'}} \\
			&=\left( \fint_{B(x,t)} \left| V(y)\right|^{\alpha} dy \right)^{\frac{1}{\alpha}} \; \left( \fint_{B(x,t)} \left|\dfrac{1}{t}(\phi(y)-\phi_{B_x^t}) \right|^{\beta^*} dy \right)^{\frac{1}{\beta^*}} \\
			&\lesssim \left( \fint_{B(x,t)} \left| V(y)\right|^{\alpha} dy \right)^{\frac{1}{\alpha}} {\left( \fint_{B(x,t)} \left|\nabla \phi(y) \right|^{\beta} dy \right)^{\frac{1}{\beta}}}\\
			& \lesssim  \left[ M(\left| V\right|^{\alpha} )(x) \right]^{\frac{1}{\alpha}}  \left[ M(\left|\nabla \phi \right|^{\beta})(x) \right]^{\frac{1}{\beta}},
\end{align*}	
where $M$ denotes the Hardy-Littlewood maximal function. 
From the definition of $M^{loc}_{W^{-1,s}}(div_{\L^*} \; V)$, the first term \eqref{eq_conv}  is controlled by  
\begin{align*}
\left| \inner{div_{\L^*} \; V}{ \Phi^x_t} \right| &\leq M^{loc}_{W^{-1,s}}(div_{\L^*} \; V)(x) \left( \fint_{B(x,t)} | \nabla \phi(y)|^{s'} dy \right)^{\frac{1}{s'}} \\
&\lesssim  M^{loc}_{W^{-1,s}}(div_{\L^*} \; V)(x) \left[ M \left(| \nabla \phi|^{s'}\right)(x) \right]^{\frac{1}{s'}}, 
\end{align*} 		
for some $1<s<\infty$ to be chosen later. Taking the supremum for $0<t<1$ we have
\begin{equation}\label{fund1}
	m_{\varphi}(V \cdot \nabla_{\L} \phi )(x) \lesssim  M^{loc}_{W^{-1,s}}(div_{\L^*} \; V)(x) \left[ M \left(| \nabla \phi|^{s'}\right)(x) \right]^{\frac{1}{s'}} 
			+ \left[ M(\left| V\right|^{\alpha} )(x) \right]^{\frac{1}{\alpha}}  \left[ M(\left|\nabla \phi \right|^{\beta})(x) \right]^{\frac{1}{\beta}},
\end{equation}
and to compute the norm $\|V \cdot \nabla_{\L} \phi \|_{h^{1}}$ it is sufficient estimate each term in the right side hand in $L^1$ norm. Using the H\"older's inequality for the first term, we have	
\begin{align*}
\| M^{loc}_{W^{-1,s}}(div_{\L^*} \; V)\left[ M \left(| \nabla \phi|^{s'}\right)\right]^{\frac{1}{s'}} \|_{L^{1}} 
&\leq \| M^{loc}_{W^{-1,s}}(div_{\L^*} \; V)\|_{L^{p}} \| \left[M \left(| \nabla \phi|^{s'}\right)\right]^{\frac{1}{s'}}\|_{L^{p'}} \\
& = \| M^{loc}_{W^{-1,s}}(div_{\L^*} \; V)\|_{L^{p}} \| M \left(| \nabla \phi|^{s'}\right)\|^{1/s'}_{L^{p'/s'}} \\
& \leq \| M^{loc}_{W^{-1,s}}(div_{\L^*} \; V)\|_{L^{p}} \| \nabla \phi\|_{L^{p'}}, 
\end{align*}
where in the last inequality we used the boundedness of Hardy-Littlewood maximal function since $p'>s'$ that is equivalent to $p<s$. Note that if $1<p<N$ then we may choose some $p<s<p^{*}$, otherwise if $p \geq N$ we choose any $1<s<p$. Thus from Lemma \ref{lemma_bound_M} we have
\begin{align*}
\| M^{loc}_{W^{-1,s}}(div_{\L^*} \; V)\left[ M \left(| \nabla \phi|^{s'}\right)\right]^{\frac{1}{s'}} \|_{L^{1}} 
& \lesssim \| M^{loc}_{W^{-1,s}}(div_{\L^*} \; V)\|_{L^{p}} \| \nabla \phi\|_{L^{p'}}\\
& \lesssim \| div_{\L^*} \; V\|_{L^{p}} \| {\nabla} \phi\|_{L^{p'}}.
\end{align*}
For the second term, we use the H\"older's inequality and the boundedness of maximal operator M again to conclude that  
\begin{align*}
 \|\left[ M(\left| V\right|^{\alpha}) \right]^{\frac{1}{\alpha}}  \left[ M(\left|\nabla \phi \right|^{\beta}) \right]^{\frac{1}{\beta}}\|_{L^{1}} &\lesssim  \|  M(\left| V\right|^{\alpha})\|^{1/\alpha}_{L^{p/\alpha}}  \| M(\left|\nabla \phi \right|^{\beta})\|^{1/\beta}_{L^{p'/\beta}} 
 \lesssim  \|  V\|_{L^{p}}  \| \nabla \phi \|_{L^{p'}}.
\end{align*}
Combining the previous control in norm $L^{1}$ and using the Lemma \ref{calderon} we have
\begin{align*}
\|V \cdot \nabla_{\L} \phi \|_{h^{1}}& \lesssim   \| div_{\L^*} \; V\|_{L^{p}} \| \nabla \phi\|_{L^{p'}}+ \|  V\|_{L^{p}}  \| \nabla \phi \|_{L^{p'}} 
\lesssim \left( \|  V\|_{L^{p}} +  \| div_{\L^*} \; V\|_{L^{p}} \| \right) \| \nabla_{\L} \phi \|_{L^{p'}}, 
\end{align*} 
as desired.
\qed


\subsection{Proof of Theorem \ref{prop_div=0const}}

Let $V:=(V_1,V_2,...,V_{n})$ and  $U_i:= -E \ast V_{i}$, where $E$ is the fundamental solution of $\Delta_{\L}$. Clearly $-\Delta_{\L} \; U_i = V_i$  and  $\norma{\partial^2 U_i}_{L^{p}} \leq C \norma{V_i}_{L^p}$, for any $1<p<\infty$ and for each $i=1, ... ,n$. Note that $U:=(U_1,U_2,...,U_n)$ satisfies $\; div_{\L^*} \; U = \; div_{\L^*} \; V = 0$. Consider now  $B:= curl_{{\L}} \; U=(B_{ij})_{1\leq i,j\leq n}$ with $B_{ij}:= {L_j}U_i - {L_i}U_j$ and denote $B_j := (B_{1j} \; B_{2j} \; ... \; B_{nj})$ the $j$-th  column of the (symmetric) matrix $B$. Thus 
	\begin{equation*}
		div_{\L^*}\; B_j = \sum_{i=1}^{n} {L_i^*} B_{ij}  
	= {L_j} (div_{\L^*}\; U) - \Delta_{\L} \; U_j = V_j.
	\end{equation*}
	
	This way, 
	\begin{eqnarray*}
		\overline{V \cdot W} = \sum_{j=1}^{n} (\overline{div_{\L^*}\; B_j})W_j &=& -\sum_{j=1}^{n} (div_{\L}\; \overline{B_j})W_j\\
		&=& -\sum_{j=1}^{n} div_{\L}\; (\overline{B_j}W_j) + \sum_{i,j=1}^{n} \overline{B_{ij}} {L_i}(W_j)\\
		&=& -\sum_{j=1}^{n} div_{\L}\; (\overline{B_j}W_j) + \sum_{i<j} \overline{B_{ij}}\left( {L_i}W_j - {L_j}W_i \right) \\
		&=& -\sum_{j=1}^{n} div_{\L}\; (\overline{B_j}W_j) + \sum_{i<j} \overline{B_{ij}} ({\rm curl}_{\L} \; W)_{ij}.
	\end{eqnarray*}
	Now, let $\widetilde{B}$ the symmetric matrix given by $\widetilde{B}_{ij} := B_{ij} - (B_{ij})_{B_x^t}$ that satisfies  $div_{\L^*}\; \tilde{B}_j = div_{\L^*}\; B_j = V_j$. 
It is clear that 
	$$\overline{V \cdot W} = - \sum_{j=1}^{n} div_{\L}\; (\overline{\widetilde{B}_j} W_j) + \sum_{i<j} \overline{\widetilde{B}_{ij}} ({\rm curl}_{\L} \; W)_{ij}.$$

Let $\varphi \in C_c^{\infty}(B(0,1))$ with $\varphi \geq 0$ and $\int \varphi =1$, then we may write 
	\begin{eqnarray*}
		\sum_{i<j} \inner{ ({\rm curl}_{\L} \; W)_{ij}}{\varphi_t^x\widetilde{B}_{ij}} &=& \sum_{i<j} \int_{B(x,t)} ({\rm curl}_{\L} \; W)_{ij}(y)\; \overline{\varphi_t^x(y)\widetilde{B}_{ij}(y)} \; dy\\
		&=& \int_{B(x,t)} \varphi_t^x(y) \sum_{i<j} \overline{\widetilde{B}_{ij}}(y) ({\rm curl}_{\L} \; W)_{ij}(y) \; dy \\
		&=& \int_{B(x,t)} \varphi_t^x(y) \left( \sum_{j=1}^{n} div_{\L}\; (\overline{\widetilde{B}_j}W_j)(y) + \overline{V \cdot W}(y) \right)\; dy\\
		&=& \sum_{j=1}^{n} \int_{B(x,t)} \varphi_t^x(y) \; div_{\L}\; (\overline{\widetilde{B}_j}W_j)(y) \; dy + \overline{\varphi_t * V \cdot W}(x)\\
		&=& -\sum_{j=1}^{n} \int_{B(x,t)} \nabla_{\L} \varphi_t^x(y) \cdot (\widetilde{B}_j \overline{W_j})(y) \; dy + \overline{\varphi_t * V \cdot W}(x). \\
	\end{eqnarray*}
that implies 
\begin{eqnarray*}
		|\left(\varphi_t * V \cdot W\right)(x)| &\leq& \sum_{i<j} \left| \inner{ ({\rm curl}_{\L} \; W)_{ij}}{\varphi_t^x\widetilde{B}_{ij}} \right|
		+ \dfrac{\norma{ \nabla_{\L} \varphi}_{L^{\infty}}}{t^{N+1}} \int_{B(x,t)} \left| (\tilde{B}_j \overline{W_j})(y) \right| dy \\
		&\lesssim& \sum_{i<j} M^{loc}_{W^{-1,s}}(({\rm curl}_{\L} \; W)_{ij})(x) \left(\fint_{B(x,t)} | \nabla \widetilde{B}_{ij}(y)|^{s'} dy \right)^{\frac{1}{s'}}\\
	&+&\dfrac{1}{t} \fint_{B(x,t)} \left| (\tilde{B}_j \overline{W_j})(y) \right| dy,
	\end{eqnarray*}
where in the first inequality we used the definition of the operator $M^{loc}_{W^{-1,s}}$ to some $s$ to be chosen later. 

Consider $1<\alpha<p$ and $1<\beta <p'$, analogous in the proof of Theorem \ref{prop_curl=0const}. Applying the H\"older's inequality and the Sobolev-Poincar\'e inequality where $\beta'=\alpha^{\ast}$ we have
	\begin{eqnarray*}
		\dfrac{1}{t} \fint_{B(x,t)} \left| (\tilde{B}_j \overline{W_j})(y) \right| dy
		&\lesssim& \left( \fint_{B(x,t)} \left| W_j(y)\right|^{\beta} dy \right)^{\frac{1}{\beta}}  \left( \fint_{B(x,t)} \left|\dfrac{1}{t}\big(B_{ij}(y) - (B_{ij})_{B_x^t}\big) \right|^{\beta'} dy \right)^{\frac{1}{\beta'}} \\
		&=& \left( \fint_{B(x,t)} \left| W_j(y)\right|^{\beta} dy \right)^{\frac{1}{\beta}} \left( \fint_{B(x,t)} \left|\dfrac{1}{t}(B_{ij}(y) - (B_{ij})_{B_x^t}) \right|^{\alpha^*} dy \right)^{\frac{1}{\alpha^*}} \\
		&\lesssim& \left(  \fint_{B(x,t)} \left| W_j(y)\right|^{\beta} dy \right)^{\frac{1}{\beta}} {\left( \fint_{B(x,t)} \left|\nabla B_{ij}(y) \right|^{\alpha} dy \right)^{\frac{1}{\alpha}}}.
	\end{eqnarray*}
Plugging this inequality at previous control and taking the supremum for $0<t<1$ we have
\begin{align*}
m_{\varphi}(V \cdot W)(x)&\lesssim \sum_{i<j} M^{loc}_{W^{-1,s}}(({\rm curl}_{\L} \; W)_{ij})(x) \left( M \left(| \nabla \tilde{B}_{ij}|^{s'}\right)(x) \right)^{\frac{1}{s'}} \\
&+\sum_{i,j=1}^{n} \left( M(\left| W_j \right|^{\beta} )(x) \right)^{\frac{1}{\beta}}  \left( M(\left|\nabla B_{ij} \right|^{\alpha})(x) \right)^{\frac{1}{\alpha}}.
\end{align*}

Taking the same choice of $s$ in the previous theorem (in fact, just replace $p$ by $p'$ in the mentioned calculations) and using the H\"older's inequality, we may conclude

	\begin{eqnarray*}
		\|V \cdot W \|_{h^{1}}
		&\lesssim& \sum_{i,j=1}^{n} \left( \norma{M^{loc}_{W^{-1,s}}(({\rm curl}_{\L} \; W)_{ij})}_{ L^{p'}} + \norma{ W_j}_{L^{p'}} \right) \norma{ \nabla B_{ij}}_{L^{p}} \\
		&\lesssim& \sum_{i,j=1}^{n} \left(\|{\rm curl}_{\L} \; W)_{ij}\|_{ L^{p'}} + \norma{ W_j}_{L^{p'}} \right)\norma{ \nabla B_{ij}}_{L^{p}}.
	\end{eqnarray*}
From the definition of $B_{ij}$ we have $\norma{\nabla B_{ij}}_{L^p} \lesssim \norma{U}_{W^{2,p}} \lesssim \norma{V}_{L^p}$, thus
		
	$$\norma{V \cdot W}_{h^1} \lesssim \left( \norma{W}_{L^{p'}} + \sum_{i,j} \norma{({\rm curl}_{\L} \; W)_{ij}}_{L^{p'}} \right)\norma{V}_{L^p},$$
as desired. 
\qed

\section{Proof of the Theorem B}\label{S6} 
Let $g \in bmo(\R^N)$ 
and assume $f \in (\D C_{\L})^p_{1,0} \cup (\D C_{\L})^p_{0,1}  \subset h^{1}(\R^{N})$ from Theorem A. By the duality $bmo(\R^N)=(h^1(\R^N))^{\ast}$ follows 
$$\left|\int_{\R^N} g(x)\overline{f(x)}dx \right|\leq C \norma{g}_{bmo}, \quad \forall \,  f \in (\D C_{\L})^p_{1,0} \cup (\D C_{\L})^p_{0,1}. $$
So now, it is sufficient to prove that 
$$\norma{g}_{bmo} \leq C \sup_{f \in X} \left| \int_{\R^N} g(x)\overline{f(x)}dx \right|$$ 
for $X= (\D C_{\L})^p_{1,0}$ or $X=(\D C_{\L})^p_{0,1}$.
In order to estimate  $\norma{g}_{bmo}$, from the definition in \eqref{bmo}, we split in two cases : balls $B:=B(x_0, R)$ with $R\leq 1$ and $R >1$. 

Let $B^{*}:=B(x_0,2R)$. The Theorem III.2 in \cite{CLMS} asserts that 
	\begin{equation*}
		\left( \fint_{B} |g(x)-g_B|^2 dx \right)^{\frac{1}{2}} \leq C \; \sup_{V, W} \left| \int g(x) (V \cdot W)(x)dx \right|,
	\end{equation*}
	where the supremum is taken over all real vector fields $V, W$ in $C_c^\infty(B^{*})$, with $\norma{V}_{L^2}, \norma{W}_{L^{2}} \leq 1$, satisfying ${\rm div} \; V = 0$ and ${\rm curl} \; W = 0$. We will adapt this argument in our setting. It follows by \cite[Corollary 2.1, pp. 20]{GR} and Lemma \ref{lemma_normaL_W} that
	\begin{equation}\label{volta}
		\norma{g-g_B}_{L^2(B)}  \lesssim \norma{{\nabla g}}_{W^{-1,2}(B)} \lesssim \sum_{i=1}^n \norma{L_i^* g}_{W^{-1,2}(B)} = \sup_{\substack{\norma{\nabla u}_{L^2(B)} \leq 1\\u \in C^\infty_c(B)}} \left| \int g(x) \; \overline{L_i u}(x)dx \right|.
	\end{equation}


We claim that for each $u \in C_c^\infty(B)$ with $\norma{\nabla u}_{L^2(B)} \leq 1$ and $1<p<\infty$ there exist vector fields $V,W$ satisfying $div_{\L^*} \; V = 0$ with $\norma{V}_{L^p}  \leq 1$ and $curl_{\L} \; W = 0$ with $\norma{W}_{L^{p'}}  \leq 1$ such that 
\begin{equation}\label{identVW}
V \cdot W = C |B|^{-\frac{1}{2}} \overline{L_iu}, 
\end{equation} 
for some constant $C>0$. Plugging into \eqref{volta} {we have}
	\begin{equation}\label{parte1}
		\left( \fint_{B} |g(x)-g_B|^2 dx \right)^{\frac{1}{2}} \lesssim {\sum_{i=1}^n \norma{L_i^* g}_{W^{-1,2}(B)}  \lesssim \; \sup_{f \in (\D C_{\L})^p_{1,0} \cap (\D C_{\L})^p_{0,1}} \left| \int g(x)\overline{f(x)} dx \right|}.
	\end{equation}

Consider a function $u \in C_c^\infty(B)$ with $\norma{\nabla u}_{L^2(B)} \leq 1$ and $\eta \in C_c^\infty(B(0,2))$ such that $\eta \equiv 1$ in $B(0,1)$ and $\norma{\eta}_{L^\infty(B(0,2))} \leq 1$. Denote $\eta_B(w) := \eta\left(\dfrac{w-w_0}{R}\right)$ and define the vector fields 
		\begin{equation}\label{def_VW_1}
			V:=\dfrac{ |B|^{\frac{1}{2} - \frac{1}{p}}}{2 C} \left( \overline{L_iu} \; e_j - \overline{L_ju} \; e_i \right) 
\,\,\, \text{and}	\,\,\,		W := \gamma |B|^{-\frac{1}{p'}} \nabla_{\L} \left( \left( {x_j} -{x^0_j} \right)\eta_B(x) \right),
		\end{equation}
for $i,j \in \left\{1,...,n\right\}$ with $i \neq j$, where $\{e_1, \dots , e_n\}$ denotes de canonical basis of $\R^{n}$ and $C,\gamma$ 
are appropriate positive constants to be chosen later. 
We claim that $$V \cdot W = {\dfrac{\gamma}{2C}} |B|^{-\frac{1}{2}} \overline{L_iu},$$ where 
$div_{\L^*} \; V = 0$ with $\norma{V}_{L^p}  \leq 1$ and $curl_{\L} \; W = 0$ with $\norma{W}_{L^{p'}}  \leq 1$, for $1<p \leq 2$. 
Clearly
		$$div_{\L^*} \; V = L_j^*V_j + L_i^*V_i= \frac{|B|^{\frac{1}{2} - \frac{1}{p}}}{2C} \left[ L_i^*, L_j^* \right]\overline{u}=0$$ 
and $|V| \leq |B|^{\frac{1}{2} - \frac{1}{p}} |\nabla u| $ choosing $C:= \displaystyle \max_{\substack{1\leq k\leq m\\1\leq j \leq n}} \left\{1, |a_{jk}| \right\}$.
		Since $\supp(V) \subseteq B$ follows by the Holder's inequality that 
		$\norma{V}_{L^p(B)} \leq |B|^{\frac{1}{p} - \frac{1}{2}} \norma{V}_{L^2(B)} \leq |B|^{\frac{1}{p} - \frac{1}{2}} |B|^{\frac{1}{2} - \frac{1}{p}} \norma{\nabla u}_{L^2(B)} \leq 1.$
		It is easy to see that $curl_{\L} \; W =\gamma |B|^{-\frac{1}{p'}} curl_{\L} (\nabla_{\L} \varphi)=\gamma |B|^{-\frac{1}{p'}} \left([L_{i},L_{j}]\varphi\right)_{ij}=0$. Note that $\supp(W) \subseteq \supp(\eta_B) \subseteq B^{*}$ and $L_\ell \left( x_j - x^0_j \right) = \delta_{\ell j}$. 
Furthermore, for each $x \in B^{*}$ we have
		\begin{eqnarray*}
			\sum_{k=1}^{n} \left|L_k \left( \left({x_j} -{x^0_j} \right)\eta_B(x) \right) \right| &=& \sum_{k=1}^{n} \left| \delta_{kj} \eta_B(x) + \left( {x_j} - {x^0_j} \right) L_k  \eta_B(x)  \right|\\
			&\leq& nC |\eta_B(x)| +  2R \sum_{k=1}^{n} \frac{1}{R} \left|L_k \eta\left(\frac{x-x_0}{R} \right) \right| \\
			&=& nC + 2 \sum_{k=1}^{n} \left|L_k \eta\left(\frac{x-x_0}{R} \right) \right|
		\end{eqnarray*}
and choosing $\gamma := {2^{-\frac{N}{p'}}}( 2\norma{\nabla_{\L}\eta}_{L^\infty} + nC)^{-1}$, follows $|W|\leq 2^{-\frac{N}{p'}} |B|^{-\frac{1}{p'}}$ that implies
		\begin{eqnarray*}
			\norma{W}_{L^{p'}} &\leq& |B^{\ast}|^{\frac{1}{p'}} \norma{W}_{L^{\infty}(B^{*})}
			= 2^{\frac{N}{p'}} |B|^{\frac{1}{p'}} \norma{W}_{L^{\infty}(B^{\ast})}
			\leq 2^{\frac{N}{p'}} |B|^{\frac{1}{p'}} \; 2^{-\frac{N}{p'}} |B|^{-\frac{1}{p'}} =1.
		\end{eqnarray*}
		Lastly, we point out that $V \cdot W = V_i \overline{W_i} + V_j \overline{W_j}$ and $V_i=V_j=0$ on $\R^{N}\backslash B$. Furthermore, as $\eta_B \equiv 1$ in $B$ then for each $x \in B$ we have
		$$W_k(x) = \gamma |B|^{-\frac{1}{p'}} \left( \delta_{kj} \eta_B(x) + (x_j - x_j^0) L_k \eta_B (x)  \right) = \gamma |B|^{-\frac{1}{p'}} \delta_{kj} $$
for $k=i,j$. In particular, as we are assuming $\L$ as in \eqref{Lsimples} thus $W_i=0$ and $W_j = \gamma |B|^{-\frac{1}{p'}}$ on $B$. Therefore,
		\begin{eqnarray*}
			V \cdot W &=& V_j\; W_j = {\dfrac{|B|^{\frac{1}{2} - \frac{1}{p}}}{2C}} \overline{L_iu} \; \gamma |B|^{-\frac{1}{p'}} = {\dfrac{\gamma}{2C}} |B|^{-\frac{1}{2}}\overline{L_iu}.
		\end{eqnarray*}
Now we adapt the previous construction to attend $p > 2$. Consider the vector fields  
		\begin{equation}\label{def_VW_2}
			V = \gamma' |B|^{-\frac{1}{p}} \left[L_i^* \left(\eta_B(x) \left(x_j - x_j^0 \right) \right) e_j - L_j^* \left(\eta_B(x) \left(x_j - x_j^0 \right) \right) e_i \right] \text{ and } W = \frac{|B|^{\frac{1}{2} - \frac{1}{p'}}}{C} \nabla_{\L} u,
		\end{equation}
with $\gamma',C$ are  appropriate constants to be chosen. Analogously as proved before, we have $div_{\L^*} \; V = curl_{\L} \; W = 0$. Clearly, $\supp(V) \subset B^{*}$ and since
\begin{align*}
L_{\ell}^{*} \left(\eta_B(x) \left( x_j - x_j^0 \right)\right)&=\left( x_j - x_j^0 \right)L_{\ell}^{*}\eta_B(x)+\eta_B(x)L_{\ell}^{*}\left( x_j - x_j^0 \right)\\
&=\frac{\left( x_j - x_j^0 \right)}{R}\left(L_{\ell}^{*}\eta_B\right)\left( \frac{x-x_{0}}{R} \right)- \eta_B(x)\delta_{\ell j}
\end{align*}
that implies $|V|\leq \gamma' |B|^{-\frac{1}{p}} \left( \; 1 + 4 \norma{\nabla_{\L^*}\eta}_{L^\infty} \right)=2^{-\frac{N}{p}}|B|^{-\frac{1}{p}}=|B^{*}|^{-\frac{1}{p}}$ and then $\|V\|_{L^{p}}\leq 1$. Since $\supp(W) \subseteq B$ and $1<p'<2$ follows by the Holder's inequality that 
		$\norma{W}_{L^{p'}} \leq C^{-1}|B|^{\frac{1}{p} - \frac{1}{2}} \norma{W}_{L^2} \leq C^{-1} |B|^{\frac{1}{p} - \frac{1}{2}} |B|^{\frac{1}{2} - \frac{1}{p}} \norma{\nabla_{\L} u}_{L^2} \leq 1$, {where $C>0$ is the constant from the control $\|\nabla_{\L}u\|_{L^{2}}\leq C \|\nabla u\|_{L^{2}}$ given by $\displaystyle{C:= {N\sqrt{n}} \max_{\substack{1\leq k\leq m\\1\leq j \leq n}} \left\{ 1,|a_{jk}|\right\}}$.}  
In the same way, 
$V \cdot W = {{\gamma'}{C^{-1}}} |B|^{-\frac{1}{2}} \overline{L_iu}$. Indeed, $V \cdot W = V_i \overline{W_i} + V_j\overline{W_j}$ and now $W_i=W_j=0$ on $\R^{N}\backslash B$. 
As $\eta_B \equiv 1$ in $B$ then for each $x \in B$ we have
	$$V_k(x) = \gamma' |B|^{-\frac{1}{p}} \left( \delta_{ki}\eta_B (x)\delta_{jj} - \delta_{kj} \eta_B(x)\delta_{ij} \right) = \gamma' |B|^{-\frac{1}{p}} \delta_{ki} $$
	and $W_k = C^{-1} |B|^{\frac{1}{2}-\frac{1}{p'}}L_k u$, for $k=i,j$. Therefore,
	\begin{eqnarray*}
		V \cdot W &=& V_i \overline{W_i} = \gamma' |B|^{-\frac{1}{p}} C^{-1} |B|^{\frac{1}{2}-\frac{1}{p'}}\overline{ L_i u} = \dfrac{\gamma'}{C} |B|^{-\frac{1}{2}} \overline{L_iu}.
	\end{eqnarray*}

	
We conclude the identity \eqref{identVW} taking $C:= \max \{\gamma, \gamma' \}$. {We remark that \eqref{parte1} holds for any ball  $B$}.
	
Now we moving on assuming {$R\geq1$}. We claim that  
	\begin{equation}\label{eq_p}
		\left( \fint_{B(x_0,R)} |g(w)|^{{p}} dw \right)^{\frac{1}{p}} \leq C \sup_{f \in (\D C_{\L})^p_{0,1} } \left|\int g(x)\overline{f(x)}dx\right|.
	\end{equation}
Firstly, we will prove the control \eqref{eq_p} when $B = B(0,1)$, denoted by $B_1$. {It follows by \cite[Theorem 1, pp. 108]{Ne}  that the inequality
	\begin{equation}\label{weak}
		\norma{g}_{L^r(B_1)} \leq C \left[ \norma{g}_{W^{-1,r}(B_1)} + \sum_{i=1}^n \norma{L_i^* g}_{W^{-1,r}(B_1)} \right], 
	\end{equation}
	holds for any $1<r<\infty$. 
The estimates for $\norma{L_i^* g}_{W^{-1,p}(B_1)}$ are analogous to those presented in \eqref{parte1} replacing $W^{-1,2}(B_1)$ by $W^{-1,p}(B_1)$. 
In fact, we claim that for each $u \in C_c^\infty(B_1)$ with $\norma{\nabla u}_{L^{p'}(B_1)} \leq 1$ and $1<p'<\infty$ there exist vector fields $V,W$ satisfying $div_{\L^*} \; V = 0$ with $\norma{V}_{L^p}  \leq 1$ and $curl_{\L} \; W = 0$ with $\norma{W}_{L^{p'}}  \leq 1$ such that 
\begin{equation}\label{identVW_p}
	V \cdot W = \widetilde{C} |B_1|^{-\frac{1}{p}} \overline{L_iu}, 
\end{equation} 
for some constant $\widetilde{C}>0$ and then 
\begin{equation}\label{parte1_p}
	\sum_{i=1}^n \norma{L_i^* g}_{W^{-1,p}(B_1)}  \lesssim \; \sup_{f \in (\D C_{\L})^p_{1,0} \cap (\D C_{\L})^p_{0,1}} \left| \int g(x)\overline{f(x)} dx \right|.
\end{equation}
As before, consider a function $u \in C_c^\infty(B_1)$ with $\norma{\nabla u}_{L^{p'}(B_1)} \leq 1$ and $\eta \in C_c^\infty(B_{1}^*)$ such that $\eta \equiv 1$ in $B_{1}$ and $\norma{\eta}_{L^\infty(B_{1}^*)} \leq 1$. Define the vector fields 
\begin{equation}\label{def_VW_3}
	V = \gamma' |B_1|^{-\frac{1}{p}} \left[L_i^* \left(\eta(x) x_j \right) e_j - L_j^* \left(\eta(x) x_j \right) e_i \right] \text{ and } W = C^{-1} \nabla_{\L} u,
\end{equation}
for $i,j \in \left\{1,...,n\right\}$ with $i \neq j$, 
and $C,\gamma'$
are appropriate positive constants to be chosen later. Analogously as proved before, we have $div_{\L^*} \; V = curl_{\L} \; W = 0$. Clearly, $\supp(V) \subset B_{1}^*$ and since
\begin{align*}
	L_{\ell}^{*} \left(\eta(x) x_j \right)&= x_jL_{\ell}^{*}\eta(x) + \eta(x)L_{\ell}^{*} x_j = x_jL_{\ell}^{*}\eta(x) - \eta(x)\delta_{\ell j}
\end{align*}
we have $|V|\leq \gamma' |B_1|^{-\frac{1}{p}} \left( \; 1 + 4 \norma{\nabla_{\L^*}\eta}_{L^\infty} \right)=|B_{1}^*|^{-\frac{1}{p}}$, choosing $\gamma'= 2^{-\frac{N}{p}}( 1+ 4\norma{\nabla_{\L}\eta}_{L^\infty} )^{-1}$, that implies $\|V\|_{L^{p}}\leq 1$. Taking the constant from the control $\|\nabla_{\L}u\|_{L^{p'}}\leq C \|\nabla u\|_{L^{p'}}$ given by $\displaystyle{C:= {N\sqrt{n}} \max_{\substack{1\leq k\leq m\\1\leq j \leq n}} \left\{ 1,|a_{jk}|\right\}}$, then $\norma{W}_{L^{p'}} \leq C^{-1} \norma{\nabla_{\L} u}_{L^{p'}} \leq 1$. \\
To prove
$V \cdot W = {{\gamma'}{C^{-1}}} |B_1|^{-\frac{1}{p}}\overline{L_iu}$, note that $V \cdot W = V_i \overline{W_i} + V_j\overline{W_j}$ and $W_i=W_j=0$ on $\R^{N}\backslash B_{1}$.
As $\eta \equiv 1$ in $B_{1}$ then for each $x \in B_{1}$ we have
$$V_k(x) = \gamma' |B_{1}|^{-\frac{1}{p}} \left( \delta_{ki}\eta (x)\delta_{jj} - \delta_{kj} \eta(x)\delta_{ij} \right) = \gamma' |B_{1}|^{-\frac{1}{p}} \delta_{ki} $$
and $W_k = C^{-1} L_k u$, for $k=i,j$. Therefore,
\begin{eqnarray*}
V \cdot W &=& V_i \overline{W_i} = \gamma' |B|^{-\frac{1}{p}} C^{-1} \overline{ L_i u} = \dfrac{\gamma'}{C} |B|^{-\frac{1}{p}} \overline{L_iu}.
\end{eqnarray*}
We conclude the identity \eqref{identVW_p} taking $\widetilde{C}:=\dfrac{\gamma'}{C} |B|^{-\frac{1}{p}}$.
	
	\begin{lemma}\label{lema_phi}
		If $\phi \in C^\infty_c(B(0,1))$ then we can write
		$\phi = V_1 \cdot W_1$, 
		where $V_1, W_1$ 
		are smooth vector fields satisfying the following properties: 
		\begin{enumerate}
			\item[(i)] $\supp V_1 \subset B(0,1)$ and $\supp W_1 \subset B(0,2)$;
			\item[(ii)] $curl_{\L} \; W_1 = 0$ and $\norma{W_1}_{L^{p'}} \leq C_1$, for some $C_{1}>0$ independent of $\phi$;
			\item[(iii)] $\norma{V_1}_{L^{p}} = \norma{\phi}_{L^{p}}$ with $\norma{div_{\L^*} \; V_1}_{L^{p}} \leq \norma{\nabla_{\L^*} \phi}_{L^{p}}$.
		\end{enumerate}
Analogously, we may write $\phi = V_2 \cdot W_2$, where $V_2, W_2$ 
		are smooth vector fields satisfying: 	
	\begin{enumerate}
			\item[(iv)] $\supp W_2 \subset B(0,1)$ and $\supp V_2 \subset B(0,2)$;
		         \item[(v)] $div_{\L^*} \; V_2 = 0 $ and $\norma{V_2}_{L^p} \leq C_2$,  for some $C_{2}>0$ independent of $\phi$;
			\item[(vi)] $\norma{W_2}_{L^{p'}} = \norma{\phi}_{L^{p'}}$ with $\norma{curl_{\L} \; W_2}_{L^{p'}} \leq 2\norma{\nabla_{\L} \phi}_{L^{p'}}$;
		\end{enumerate}
	\end{lemma}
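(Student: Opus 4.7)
The plan is to give both decompositions by explicit ansätze that exploit the constant-coefficient structure of $\mathcal L$: the operator $\nabla_{\mathcal L}$ turns $x_j$ (for $1\le j\le n$) into the canonical vector $e_j\in\C^n$, and for the same reason $\mathrm{curl}_{\mathcal L}\,\nabla_{\mathcal L}=0$ and $\mathrm{div}_{\mathcal L^{*}}$ annihilates antisymmetric ``curl-like'' combinations. Fix once and for all a cutoff $\eta\in C_c^\infty(B(0,2))$ with $\eta\equiv 1$ on $B(0,1)$, and let $e_1,\dots,e_n$ denote the canonical basis of $\C^n$.

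For the decomposition in (i)--(iii), I would simply set
$$W_1(x) := \nabla_{\mathcal L}\bigl(x_1\,\eta(x)\bigr), \qquad V_1(x) := \phi(x)\,e_1.$$
Since $L_k(x_1)=\delta_{k1}$ and $\eta\equiv 1$ on $B(0,1)\supset\supp\phi$, we have $W_1=e_1$ on $\supp\phi$, so $V_1\cdot W_1=\phi$ pointwise; the support condition in (i) is immediate. For (ii), $\mathrm{curl}_{\mathcal L}W_1=\bigl([L_k,L_\ell](x_1\eta)\bigr)_{k,\ell}=0$ by the constant-coefficient hypothesis, and $\|W_1\|_{L^{p'}}$ is controlled by a constant depending only on $\eta$ and the $a_{ik}$. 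For (iii), $\|V_1\|_{L^p}=\|\phi\|_{L^p}$ is trivial, and $\mathrm{div}_{\mathcal L^{*}}V_1=L_1^{*}\phi$, so $\|\mathrm{div}_{\mathcal L^{*}}V_1\|_{L^p}\le\|\nabla_{\mathcal L^{*}}\phi\|_{L^p}$.

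For the decomposition (iv)--(vi), I would choose distinct indices $i\ne j$ in $\{1,\dots,n\}$ (available since $n\ge 2$) and take the antisymmetric ansatz
$$V_2 := L_i^{*}(x_j\eta)\,e_j\;-\;L_j^{*}(x_j\eta)\,e_i, \qquad W_2 := \phi\,e_i.$$
Using $L_k^{*}=-\overline{L_k}$, one checks directly that $L_i^{*}(x_j)=0$ and $L_j^{*}(x_j)=-1$, so on $B(0,1)$ we have $V_2=e_i$, which gives $V_2\cdot W_2=\phi$ everywhere. The divergence-free identity $\mathrm{div}_{\mathcal L^{*}}V_2 = [L_j^{*},L_i^{*}](x_j\eta)=0$ again follows from constant coefficients, and the $L^p$-bound on $V_2$ depends only on $\eta$ and $\mathcal L$, proving (v). Finally, $\|W_2\|_{L^{p'}}=\|\phi\|_{L^{p'}}$, and a direct computation yields
$$(\mathrm{curl}_{\mathcal L}W_2)_{k\ell} = L_\ell\phi\,\delta_{ik}-L_k\phi\,\delta_{i\ell},$$
so that pointwise $|\mathrm{curl}_{\mathcal L}W_2|\le\sqrt 2\,|\nabla_{\mathcal L}\phi|\le 2\,|\nabla_{\mathcal L}\phi|$, giving (vi).

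There is no substantive obstacle here: the whole lemma is an exercise in picking the right templates, and the constant-coefficient assumption does all the heavy lifting (it is what makes $\mathrm{curl}_{\mathcal L}\nabla_{\mathcal L}=0$ and $\mathrm{div}_{\mathcal L^{*}}$ of the antisymmetric combination vanish, and what reduces $L_k$ applied to coordinate functions to Kronecker deltas). The only mild care needed is bookkeeping the identifications $L_k^{*}=-\overline{L_k}$ and tracking which components of $V_m$ and $W_m$ are non-zero on $B(0,1)$, so that the inner product collapses cleanly to $\phi$ with no cross-terms, and verifying that the constants $C_1,C_2$ produced are independent of $\phi$.
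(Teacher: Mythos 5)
Your construction is essentially identical to the paper's: for the first decomposition you take $V_1=\phi\,e_1$ and $W_1=\nabla_{\mathcal L}(x_1\eta)$, and for the second you take $W_2=\phi\,e_i$ and the antisymmetric combination $V_2=L_i^*(x_j\eta)e_j-L_j^*(x_j\eta)e_i$, which is precisely the paper's choice (the paper specializes to $i=2$, $j=1$, with an overall sign that is immaterial). The verifications — $\mathrm{curl}_{\mathcal L}\nabla_{\mathcal L}=0$ and $\mathrm{div}_{\mathcal L^*}$ of the antisymmetric combination vanishing via commutators, the support and $L^p/L^{p'}$ bounds coming from the cutoff $\eta$, and the collapse of $V_m\cdot W_m$ to $\phi$ on $B(0,1)$ using $L_k(x_\ell)=\delta_{k\ell}$ — coincide with the paper's.
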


A direct consequence of the previous lemma show that  for each $\phi \in C^\infty_c(B(0,1))$ with $\norma{ \phi}_{W^{1,p'}} \leq 1$, there exists a constant $C_{2}>0$ independent of $\phi$ such that $C_{2}\phi=V_{2}\cdot W_{2} \in (\D C_{\L})^p_{0,1}$ for $1<p<\infty$. Then for $B_{1}:=B(0,1)$ we have  
\begin{eqnarray}\label{substituicao_p}
		\norma{g}_{W^{-1,p}(B_{1})} 
		= \sup_{\substack{\norma{\phi}_{W^{1,p'}(B_{1})}  \leq 1\\\phi \in C^\infty_c(B_{1})}} \left| \int g(x) \; \overline{\phi(x)}dx \right|
		&=& (C_{2})^{-1} \sup_{\substack{\norma{\phi}_{W^{1,p'}(B_{1})} \leq 1\\\phi \in C^\infty_c(B_{1})}} \left| \int g(x) \; \overline{(V_2 \cdot W_2)}(x)dx \right|  \nonumber \\
		&\leq& (C_{2})^{-1} \sup_{f \in (\D C_{\mathcal{\L}})^p_{0,1}} \left| \int g(x) \; \overline{f(x)}dx \right|.
	\end{eqnarray}
Using the first part of the lemma, the previous control follows the same replacing $(\D C_{\mathcal{\L}})^p_{0,1}$ by $(\D C_{\mathcal{\L}})^p_{1,0}$, that is,
\begin{equation}\label{substituicao_p'}
	\norma{g}_{W^{-1,p'}(B_{1})} \leq (C_{1})^{-1} \sup_{f \in (\D C_{\mathcal{\L}})^p_{1,0}} \left| \int g(x) \; \overline{f(x)}dx \right|.
\end{equation}

	\begin{proof}
		Fix $\phi \in C^\infty_c(B_{1})$ and $\eta \in C_c^\infty(B_{1}^*)$ such that $\eta \equiv 1$ in $B_{1}$ and $\norma{\eta}_{L^\infty(B_{1}^*)} \leq 1$. We define
		$V_1(x) := \phi(x)e_1$ and $W_1(x) := \nabla_{\L} \left( x_{1} \eta(x) \right)$. Clearly $curl_{\L} \; W_1 = 0$, $\norma{V_1}_{L^{p}} = \norma{\phi}_{L^{p}}$ and $\|div_{\L^*} \; V_1\|_{L^{p}}=\|L_{1}^{*}\phi\|_{L^{p}}\leq  \norma{\nabla_{\L^*} \phi}_{L^{p}}$.    Note that for $x \in B_{1}$ we have 
$$L_{1}[x_{1}\eta(x)]=\eta(x)+x_{1}[L_{1}\eta](x)=1,  $$		
since  $\supp V_1 \subset B_{1}$ we have $\left(V_1 \cdot W_1\right)(x) =\phi(x)L_{1}[x_{1}\eta(x)]=\phi(x)$. Moreover 		
		$$|W_1(x)| = \sum_{j=1}^n |L_j(x_{1}\eta(x) )| \leq |L_jx_{1}||\eta| + |x_{1}| \sum_{j=1}^n |L_j\eta(x)| \lesssim |\eta(x)| + |x_{1}| |\nabla_{\L} \eta(x) |$$
and as $\supp W_1 \subset B_{1}^*$, we have 
		$$\norma{W_1}_{L^{p'}} \leq |B_{1}^*|^{\frac{1}{p'}}\|W_{1}\|_{L^{\infty}} \leq  |B_{1}^*|^{\frac{1}{p'}}   \left(1+2\norma{\nabla_{\L} \eta}_{L^{\infty}} \right)$$ 
For the second part we define $V_2(x) = L_1^* \left( x_1 \eta(x) \right)e_2 - L_2^* \left( x_1 \eta(x) \right)e_1$, $W_2(x)=\phi(x) e_2$  that satisfies (by definition) $\norma{W_2}_{L^{p'}} = \norma{\phi}_{L^{p'}}$, $\norma{curl_{\L} \; W_2}_{L^{p'}} \leq 2\norma{\nabla_{\L} {\phi}}_{L^{p'}}$ and $div_{\L^*} \; V_2 = 0$. Since 
$$L^{\ast}_{\ell}[x_{1}\eta(x)]=\delta_{\ell 1}(x)\eta(x)+x_{1}[L^{\ast}_{\ell}\eta](x)  $$		
we have 
		$ |V_2(x) | \leq |x_1| \left( |L_1^*\eta(x)| + |L_2^*\eta (x)| \right) + |\eta(x)|$
and $\supp V_2 \subset B_{1}^*$ that implies $\norma{V_2}_{L^{p}} \leq \left( 2\norma{\nabla_{\L^*} \eta}_{L^{\infty}} +1 \right)|B_{1}^*|^{\frac{1}{p}}$. 
Note that $\supp W_2 \subset B_{1}$ and since $L^{\ast}_{\ell}[x_{1}\eta(x)]=1$ for $x \in B_{1}$ we have  $\phi=V_2 \cdot W_2$. 
		\begin{flushright}
			\qed
		\end{flushright}
	\end{proof}

	Now, we moving on for a ball $B(x_0, R)$, with $R \geq 1$. For each $\phi \in C^\infty_c(B(x_{0},R))$ we may define  $\widetilde{\phi} \in C^\infty_c(B(0,1))$ given by $\widetilde{\phi}(y):={\phi}\left(x_{0}+yR\right)$ and applying the Lemma \ref{lema_phi} there exists vector fields $\widetilde{V_{i}}, \widetilde{W_{i}}$ for $i=1,2$ satisfying (i)-(vi) above such that $\widetilde{\phi}=\widetilde{V_{i}}\cdot \widetilde{W_{i}}$. Defining ${V_i}(x) := R^{-\frac{N}{p}}\widetilde{V_i}\left( \frac{x-x_{0}}{R} \right)$ and ${W_i}(x) := R^{-\frac{N}{p'}}\widetilde{W_i}\left( \frac{x-x_{0}}{R} \right)$  we have that there exist constants $C_{i}>0$ independent of ${\phi}$ such that $C_{1}{\phi}={V_{1}} \cdot {W_{1}} \in (\D C_{\mathcal{L}})^{p}_{1,0}$ and  $C_{2}{\phi}={V_{2}} \cdot {W_{2}} \in (\D C_{\mathcal{L}})^{p}_{0,1}$.

For each ${g} \in L^{1}_{loc}(\R^{N})$, we define $\widetilde{g}(y) = {g}(x_0+Ry)$ and then 
	\begin{eqnarray}\label{ident}
		\int_{B(x_0,R)} {g}(x) \overline{(V_i \cdot{W_i}) (x)} \; dx 
		&=& \int_{B(0,1)} \widetilde{g}(y) \overline{(\widetilde{V_i} \cdot \widetilde{W_i})(y)} \; dy.
	\end{eqnarray}
	Furthermore, using change of variables and the inequality \eqref{weak} for $B_{1}:=B(0,1)$ we have
	\begin{align*}
	\left( \fint_{B(x_0,R)} |{g}(x)|^p dx \right)^{\frac{1}{p}} =\left( \fint_{B_{1}} |\widetilde{g}(y)|^p dy \right)^{\frac{1}{p}} &= C_N \norma{\widetilde{g}}_{L^p(B_{1})} \\
		&\leq C \left[ \norma{\widetilde{g}}_{W^{-1,p}(B_{1})} + \sum_{i=1}^n \norma{L_i^* \widetilde{g}}_{W^{-1,p}(B_{1})} \right]
	\end{align*}
From \eqref{substituicao_p} and the identity \eqref{ident} we have
\begin{eqnarray}
		\norma{\widetilde{g}}_{W^{-1,p}(B_{1})} 
		\lesssim \sup_{f \in (\D C_{\mathcal{\L}})^p_{0,1}} \left| \int g(x) \; \overline{f(x)}dx \right|
	\end{eqnarray}
and 
by the inequality \eqref{parte1_p} we have
	\begin{equation}\label{parte2}
 \sum_{i=1}^n \norma{L_i^* \widetilde{g}}_{W^{-1,p}(B_{1})}  \lesssim \; \sup_{f \in (\D C_{\L})^p_{1,0} \cap (\D C_{\L})^p_{0,1}} \left| \int g(x)\overline{f(x)} dx \right|.
	\end{equation}

Combining the previous estimate, we may conclude 
\begin{eqnarray*}
	\norma{g}_{bmo} 
	&\leq& \sup_{|B(x_{0},R)|\leq1} \left( \fint_{B(x_0,R)} |g(x)-g_B|^2dx \right)^{\frac{1}{2}} + \displaystyle \sup_{|B(x_{0},R)|>1} \left( \fint_{B(x_0,R)} |g(x)|^pdx \right)^{\frac{1}{p}} \\
	&\lesssim& \left( \sup_{f \in (\D C_{\L})^p_{0,1} \cap (\D C_{\L})^p_{1,0}} \left| \int g(x)\overline{f(x)}dx\right| + \sup_{f \in (\D C_{\L})^p_{0,1} } \left| \int g(x)\overline{f(x)}dx  \right| \right) \\
	&\lesssim& \sup_{f \in (\D C_{\L})^p_{0,1}} \; \left| \int_{\R^N} g(x)\overline{f(x)}dx \right|.
\end{eqnarray*}  	

The same arguments holds replacing $ (\D C_{\L})^p_{0,1}$ by  $(\D C_{\L})^p_{1,0}$ taking $p'$ instead $p$ in \eqref{weak}. 
\qed

\subsection{Proof of Corolary \ref{coro1}}

To simplify the notation, consider $V:=(\D C_{\L})^p_{1,0}$ and $F:=h^{1}(\R^{N})$. A direct consequence of Theorem A implies that $V$  is a bounded symmetric (i.e. $h \in V$ then $-h \in V$) subset of F. If we prove that the closure of $V$ in the norm $F$, denoted by $\overline{V}$, contains the unit ball of F, follows from Lemma III.1 in \cite{CLMS}, let 
each $\|f\|_{h^{1}}\leq 1$ can be decomposed by
\begin{equation}
f=\sum_{k=1}^{\infty}2^{-k}f_{k}, \quad f_{k} \in V, 
\end{equation}   
with convergence in F. Now, from Lemma III.2 in \cite{CLMS}, the closed convex hull $\widetilde{V}$ contains the unit ball of F if and only if $\|g\|_{(h^{1})^{\ast}}$ is equivalent to the functional 
$$\sup_{f \in V}\left| \int_{\R^{N}}g(x)f(x)dx \right|,$$
that is exactly the conclusion of Theorem B, since $(h^{1}(\R^{N}))^{\ast}=bmo(\R^{N})$. The decomposition \eqref{coroC} follows taking $\lambda_{k}:=2^{-k}\|f\|_{h^{1}} \in \ell^{1}(\C)$, for every $f \in h^{1}(\R^{N})$. Clearly $\|\lambda\|_{\ell^{1}}\leq \|f\|_{h^{1}}$ and the convergence in \eqref{coroC} holds also in the sense of tempered distributions. The same conclusion holds for $V=(\D C_{\L})^p_{0,1}$.

\end{document}